\theoremstyle{plain}
\newtheorem{thm}{Theorem}[section]
\newtheorem{defn}[thm]{Definition}
\newtheorem{prop}[thm]{Proposition}
\numberwithin{equation}{section}
\newcommand{\dmn}{\mathop{\rm dom}}
\renewcommand{\Im}{\mathop{\rm Im}}
\newcommand{\supp}{\mathop{\rm supp}}
\newcommand{\sgn}{\mathop{\rm sgn}}
\renewcommand{\kappa}{\varkappa}
\newcommand{\rmi}{{\rm i}}
\newcommand{\Real}{\mathbb R}
\newcommand{\eps}{\varepsilon}
\begin{document}

\title[Schr\"{o}dinger operators with  $(\alpha\delta'+\beta \delta)$-like potentials]
{Schr\"{o}dinger operators with {\Large $(\alpha\delta'+\beta \delta)$}-like potentials: norm resolvent convergence and solvable models}

\author{Yuriy Golovaty}%
\address{Department of Mechanics and Mathematics,
  Ivan Franko National University of Lviv\\
  1 Universytetska str., 79000 Lviv, Ukraine}
\curraddr{}
\email{yu\_\,holovaty@franko.lviv.ua}

\subjclass[2000]{Primary 34L40, 34B09; Secondary  81Q10}

\begin{abstract}
 For real functions $\Phi$ and $\Psi$ that are integrable and compactly supported, we prove the norm resolvent convergence, as $\eps\to0$, of a family $S_\eps$ of one-dimensional Schr\"odinger operators on the line of the form
 $$
    S_\eps= -\frac{d^2}{d x^2}+\alpha\eps^{-2}\Phi(\eps^{-1}x)+\beta\eps^{-1}\Psi(\eps^{-1}x).
 $$
The limit results are shape-dependent: regardless of the convergence of potentials in the sense of distributions the limit operator $S_0$ exists and strongly depends on the pair $(\Phi,\Psi)$.
We show that it is impossible to assign just one self-adjoint ope\-ra\-tor to the pseudo-Hamiltonian $-\frac{d^2}{dx^2}+\alpha\delta'(x)+\beta\delta(x)$, which is a symbolic notation only for a wide variety
of quantum systems with  quite different properties.
\end{abstract}

\keywords{1D Schr\"{o}dinger operator, point interaction, $\delta$-potential, $\delta'$-potential, solvable model, scattering problem}
\maketitle

\section{Introduction}

The Schr\"{o}dinger operators with singular potentials supported on a discrete set (such potentials are usually termed ``point interactions'') have attracted considerable attention both in the physical and mathematical literature  from the early thirties of the last century.  To understand the nature of quantum systems it appeared conceivable to analyze their general features about interactions with a range much smaller then the atomic size. Historically point interactions were introduced in quantum mechanics as  limits of families of squeezed potentials.
The quantum mechanical models that are based on the concept of zero range quantum interactions
reveal an undoubted effectiveness whenever solvability together with non triviality is required.
General references for this fascinating area are \cite{Albeverio2edition, AlbeverioKurasov},
which provide extensive documentation of pertinent material.

In spite of all advantages of the solvable models, which are widely used in
various applications to quantum physics, they  give rise to many mathematical difficulties.
One of the main difficulty of the analysis of zero-range interactions,
compared to Schr\"{o}dinger operators with short-range potentials, is that the Schr\"{o}dinger operators with singular potentials are often only formal differential expressions, and for the corresponding differential equations    no solution exists even in the sense of distributions.
In 1961 Berezin and Faddeev \cite{BerezinFadeev}    suggested how such formal Schr\"{o}dinger operators can be constructed as mathematically well-defined objects, and for the first time  a formal Hamiltonian was written as a self-adjoint operator derived by the theory of self-adjoint extensions of symmetric operators (see  \cite{GorbachuksBook:1991} for more details).

There exists a large body of results  on this subject. It is impossible to refer to all relevant papers, and I confine myself to a  brief overview of the one-dimensional case.
In the recent years, many results were obtained for the point interactions based on
the theories of self-adjoint extensions of symmetric operators, singular  quadratic  forms,  boundary triples and almost solvable extensions
\cite{KochubeyUMZh1989,KochubeySibMatZh1991,%
KoshmanenkoBook:1999,KostenkoMalamud:2009,%
AlbeverioKoshmanenkoKurasovNizhnik:2002}, and once more we refer to \cite{Albeverio2edition, AlbeverioKurasov} for a exhaustive list of references therein.
Another way to define a formal Schr\"{o}dinger operator with a distributional potential $v\in \mathcal{D}'(\Real)$ is to approximate it by  Schr\"{o}dinger  operators with more smooth potentials $v_\eps$ obtained by a suitable regularization as well as to use the concept of quasi-derivatives
\cite{AlbeverioNizhnik:2000,HrynivMykytyuk:2003,ExnerNeidhardtZagrebnovCMP,GoriunovMikhailetsMFAT:2010,%
 GoriunovMikhailetsMN:2010,ShkalikovSavchukMN1999, ShkalikovSavchukTMMO2003, SebaHalfLine:1985, SebRMP:1986}.
In \cite{Heydarov:2005, GadellaNegroNietoPL2009, KurasovJMAA:1996, NizhFAA2006} the solvable models for 1D Schr\"{o}dinger operators were based on specific products of $\delta^{(k)}$ and discontinuous functions, where $\delta$ is the Dirac delta function. It is worth to note that the difficulty of dealing with the multiplication in $\mathcal{D}'$ may also be overcome by using the new algebras of  generalized  functions \cite{Antonevich2:1999}.

It is common knowledge that all nontrivial point interactions at a point $x$ can be described by the coupling conditions
\begin{equation}\label{PIConditions}
\begin{pmatrix} \psi(x+0) \\ \psi'(x+0) \end{pmatrix}
=C\begin{pmatrix} \psi(x-0) \\ \psi'(x-0)\end{pmatrix},
\qquad\quad C=e^{i\varphi}\begin{pmatrix} c_{11} & c_{12} \\ c_{21} & c_{22} \end{pmatrix},
\end{equation}
where $\varphi\in [-\frac{\pi}{2},\frac{\pi}{2}]$, $c_{kl}\in\mathbb{R}$ and $c_{11}c_{22}-c_{12}c_{21}=1$.
For the physically based classification of these interactions we refer the reader to the recent preprint \cite{BrascheNizhnik:2011}, where in particular  one singles out the four most important cases:
$\delta$-potentials, $\delta'$-interactions, $\delta'$-potentials and $\delta$-magnetic potentials.
For a quantum system described by the Schr\"{o}dinger operator with a smooth enough  potential localized in a neighbourhood of $x$ one can often assign a point interaction with some matrix in \eqref{PIConditions} so that the corresponding zero-range model governs the  quantum dynamics of the true interaction with admissible fidelity, especially for low-energy particles.
However the connection between real short-range and idealized point interactions is very complex and ambiguously determined. This is  certainly the reason why  there is a number of papers on this subject  occasionally even with conflicting conclusions.

As for the $\delta$-potential, any smooth approximation of $\beta \delta(x)$ leads to the same solvable model given by conditions \eqref{PIConditions} with the matrix
\begin{equation}\label{DeltaMatrix}
    C=\begin{pmatrix} 1 & 0 \\ \beta & 1 \end{pmatrix}.
\end{equation}
Thus the result is shape-independent, i.e., it is not sensitive to  regularization.
In this case the limiting argument admits a straightforward interpretation.
The nonzero off-diagonal element of $C$ implicitly involves
the integral of the approxima\-ting potential, and hence a slow particle on the line ``feels''
only the average value of a localized potential.

The situation changes if we turn to the $\delta'$-potential.
The usual regularization of $\delta'(x)$ is a sequence  $\varepsilon^{-2}v(\varepsilon^{-1}x)$
with a zero-mean function  $v\in C^\infty_0(\Real)$.
It was shown in  \cite{GolovatyHryniv:2010} that for \textit{almost all} functions $v$ the best zero-range appro\-ximation to the Hamilto\-nian $H_\eps=-\frac{d^2}{dx^2}+\varepsilon^{-2}v(\varepsilon^{-1}x)$  is the free Hamiltonian $-\frac{d^2}{dx^2}$ subject to the split boundary conditions $\psi(-0)=\psi(+0)=0$.
These conditions define  a non-transparent interaction at the origin. However,  there exist  so-called \textit{resonant potentials} $v$ (see below for the precise definition) for which the limit behaviour of quantum system can be characterized  by
the nontrivial point interaction with the coupling matrix
\begin{equation}\label{DeltaPrimeMatrix}
    C=\begin{pmatrix} \theta & 0 \\ 0 & \theta^{-1} \end{pmatrix},
\end{equation}
where $\theta=\theta(v)$ is a spectral characteristics of the potential $v$.
Incidentally, it is of interest that for \textit{any} shape $v$ there exists a countable set of so-called resonant coupling constants $\alpha_k$ for which the spreading potentials $\alpha_kv$ are resonant. These results were recently extended to potentials $v$ of the Faddeev-Marchenko class \cite{GolovatyHryniv:2011} and generalized to the case of quantum graphs \cite{MankoJPA:2010}.

Therefore the results on $\delta'$-potentials become shape-dependent:
depending on  $v$ the Hamiltonians $H_\eps$ regularize different kinds of point interactions, nevertheless all of them involve the $\delta'$-like potentials. Hence, it is impossible to assign just one self-adjoint ope\-ra\-tor to the pseudo-Hamiltonian $-\frac{d^2}{dx^2}+\alpha\delta'(x)$, which is a symbolic notation  for a wide variety
of quantum systems with  quite different qualitative and quantitative cha\-rac\-teristics.

It has been known for a very long time that the $\delta'$-potential defined through the regularization $\varepsilon^{-2}v(\varepsilon^{-1}x)$ is opaque acting as a perfect wall, see widely cited \v{S}eba's paper \cite{SebRMP:1986} of 1986. However, such a conclusion is in contradiction with the analysis of $H_\eps$  with piece-wise constant potentials $v$  performed recently by Zolotaryuk a.o.
\cite{ChristianZolotarIermak03,Zolotaryuk:2008,Zolotaryuk:2010,Zolotaryuks:2011},
where the resonances in the transmission probability for the scattering problem are established. In \cite{GolovatyManko:2009} a similar resonance phenomenon  is also obtained in the asymptotics of eigenvalues for  the Schr\"{o}dinger operators perturbed by $\delta'$-like potentials.
The authors of~\cite{AlbeverioCacciapuotiFinco:2007,CacciapuotiExner:2007} faced the question on the convergence of $H_\eps$ in  approximation of a smooth planar quantum waveguide with a quantum graph. Under the assumption that the mean value of $v$ is different from zero, they also singled out the set of resonant potentials~$v$ producing a non-trivial limit of $H_\eps$ in the norm resolvent sense.
The situation with these controversial results was clarified in \cite{GolovatyHryniv:2010}. Curiously enough, P.~\v{S}eba was the first to discover in 1985 \cite{SebaHalfLine:1985} the resonant potentials for a similar family of the Dirichlet Schr\"{o}dinger operators
on the half-line  producing in the limit the Robin boundary condition.

This paper can be viewed as a natural continuation of  the recent work \cite{GolovatyManko:2009,GolovatyHryniv:2010,GolovatyHryniv:2011}
on the Schr\"{o}dinger operators with $\delta'$-like potentials to the case in which the potentials are a smooth enough regularization of the distribution $\alpha\delta'(x)+\beta \delta(x)$. Clearly it is to be expected that the limit results concerning such families of squeezed potentials will be also shape-dependent.

\medskip

\paragraph{\it Notation}
Throughout the paper, $W_2^l(\omega)$ stands for the Sobolev space of functions defined on a set $\omega\subset\Real$ that belong to $L_2(\omega)$
together with their derivatives up to order $l$. The norm in $W_2^2(\omega)$ is given by
$\|f\|_{W_2^2(\Omega)}= \bigl(\|f''\|^2_{L_2(\Omega)} + \|f\|^2_{L_2(\Omega)} \bigr)^{1/2}$,
where $\|f\|_{L_2(\Omega)}$ is the usual $L_2$-norm. We shall write $\|f\|$ instead of $\|f\|_{L_2(\Real)}$.

\section{Statement of Problem and Main Result}
Let us consider the formal Hamiltonian
\begin{equation*}
H=-\frac{d^2}{dx^2}+\alpha\delta'(x)+\beta \delta(x),\qquad x\in\mathbb R,
\end{equation*}
where  $\delta$ is  the Dirac delta function.
In accordance with the classic theory of distributions we have
$
(H \phi)(x)=-\phi''(x)+\alpha \phi(0)\delta'(x)+(\beta \phi(0)-\alpha \phi'(0))\delta(x)
$
for any $\phi\in C^1(\Real)$. However,  there exist no solutions in $\mathcal{D}'(\mathbb{R})$ to the equation
$H \phi=\lambda \phi$ for a nonzero $\alpha$, except for a trivial one.
The reason for this  at first sight surprising fact lies in the theory of distributions:    $\mathcal{D}'(\mathbb{R})$ is not an algebra with respect to the ``pointwise'' multiplication.

Instead of $H$, we  consider  the family of Schr\"{o}dinger operators
\begin{equation}\label{Seps}
    S_{\eps}= -\frac{d^2}{dx^2}+\frac{\alpha}{\eps^2}\Phi\left(\frac{x}{\eps}\right)
    +\frac{\beta}{\eps}\Psi\left(\frac{x}{\eps}\right), \quad
\dmn S_{\eps}=W_2^2(\mathbb{R}).
\end{equation}
with integrable  potentials $\Phi$ and $\Psi$ of compact supports.
Here $\varepsilon$ is a small positive parameter, and the coupling constants $\alpha$ and $\beta$ are assumed to be real.
One of the questions of our primary interest in this paper is the behaviour of~$S_{\varepsilon}$ as~$\eps$ tends to zero.
The motivation for this question stems from the fact that
the potentials
$$
    V_\eps=\alpha\eps^{-2}\Phi(\eps^{-1}\,\cdot\,)+\beta\eps^{-1}\Psi(\eps^{-1}\,\cdot\,)
$$
approximate the pseudopotential $\alpha\delta'+\beta \delta$  under some assumptions on $\Phi$ and $\Psi$. Indeed, if the following conditions hold
\begin{equation}\label{DDpConds}
   \int_{\Real}\Phi\,ds=0, \qquad \int_{\Real}s\Phi\,ds=-1\quad\text{and}\quad \int_{\Real}\Psi\,ds=1,
\end{equation}
 then
    $\alpha\varepsilon^{-2}\Phi(\varepsilon^{-1}x)+\beta\varepsilon^{-1}\Psi(\varepsilon^{-1}x)\to \alpha\delta'(x)+\beta \delta(x)$
 as $\eps\to 0$ in the sense of distributions. In this case, we call $\Phi$ the shape of a $\delta'$-like sequence and $\Psi$ the shape of a $\delta$-like one.

Notwithstanding the title of paper, all results presented here concern the potentials $V_\eps$ with arbitrary $\Phi$ and $\Psi$ of compact support, and the $\alpha\delta'+\beta \delta$-like potentials are only a partial case
in our considerations. Note that if the first condition in \eqref{DDpConds} is not fulfilled,
then the potentials $V_\eps$ do not converge even in the distributional sense.
However,  surprisingly  enough, regardless of the  convergence of $V_\eps$ the limit
of~$S_\eps$ exists in the norm resolvent sense (i.e., in the sense of  uniform convergence of resolvents).
\begin{defn}[\hskip-0.5pt\cite{Klaus:1982}]
    We say that the Schr\"odinger operator~$-\frac{d^2}{d s^2}+q$ in $L_2(\Real)$ possess the \emph{half-bound state} (or \emph{zero-energy resonance}) provided there exists a solution~$u$ to the equation $- u'' +qu= 0$ in $\Real$ that is bounded on the whole line, i.e. $u\in L^\infty(\Real)$. The potential $q$ is then called \emph{resonant}.
\end{defn}
Such a solution~$u$ is  unique up to a scalar factor and has nonzero limits $u(\pm\infty)$.
Our main result reads as follows.
\medskip

\textsc{Main result.} {\it
Let $\Phi$ and $\Psi$ be integrable and bounded real functions of compact support. Then the operator family $S_{\varepsilon}$ given by \eqref{Seps} converges as $\eps\to0$ in the norm resolvent sense.

If the potential $\alpha\Phi$ is resonant with a half-bound state $u_\alpha$, and $u_\alpha^\pm=u_\alpha(\pm\infty)$, then the limit operator $S_0$ is a perturbation of the free Schr\"odinger operator defined by $S_0\phi=-\phi''$
on functions $\phi$ in~$W_2^2(\Real\setminus\{0\})$ obeying the  boundary conditions at the origin
\begin{equation}\label{ResZeroRangeConds}
    \phi(+0) - \theta_\alpha \phi(-0)=0,
        \quad \phi'(+0) - \theta_\alpha^{-1} \phi'(-0)=\beta\,\kappa_\alpha \phi(-0).
\end{equation}
The parameters $\theta_\alpha$ and $\kappa_\alpha$ are specified by the potentials $\Phi$ and $\Psi$:
\begin{equation*}
    \theta_\alpha=\frac{u_\alpha^+}{u_\alpha^-}, \qquad
    \kappa_\alpha=\frac{1}{u_\alpha^-u_\alpha^+}\int_{\Real}\Psi u_\alpha^2\,dt.
\end{equation*}

Otherwise, in the non-resonant case, the limit $S_0$ is equal to the direct sum $S_-\oplus S_+$ of the Dirichlet half-line Schr\"odinger operators~$S_\pm$.
}
\medskip

The result is proved in Theorems~\ref{ThmConvergenceAtResonance} and \ref{ThmConvergenceNonResonant} below. In addition, convergence of the scattering data for $S_\eps$ to the ones for $S_0$ is established in Theorem~\ref{ThmConvScattData}.

In the resonant case, the point interaction generated by the coupling matrix
\begin{equation*}
    C(\Phi,\Psi)=\begin{pmatrix} \theta_\alpha & 0 \\ \beta \kappa_\alpha & \theta_\alpha^{-1} \end{pmatrix}
\end{equation*}
in \eqref{PIConditions} may be regarded as a first approximation to the real interaction governed by the Hamiltonian $S_\eps$. The explicit relation between $\theta_\alpha$, $\kappa_\alpha$ and the potentials $\Phi$, $\Psi$ makes it possible to carry out a quantitative analysis of the quantum system, for instance, to compute approximate values of the scattering data for given $\Phi$ and $\Psi$.

It is appropriate to mention here that in  \cite{AlbeverioDabrowskiKurasov:1998, KurasovJMAA:1996} the pseudo-potential $\alpha\delta'+\beta \delta$ was interpreted as a point interaction with the matrix
\begin{equation*}
    C=\begin{pmatrix} \frac{2+\alpha}{2-\alpha} & 0 \\ \frac{4\beta}{(2-\alpha)^2} & \frac{2-\alpha}{2+\alpha} \end{pmatrix},
\end{equation*}
and some split boundary conditions were associated with the singular values $\alpha=\pm 2$.
The solvable model was derived from the assumption that the following product formulae
\begin{equation*}
    v(x)\delta(x)=\{v\}_0\,\delta(x),\qquad v(x)\delta'(x)=\{v\}_0\,\delta'(x)-\{v'\}_0\,\delta(x)
\end{equation*}
hold, where $ \{f\}_0=\frac12(f(-0)+f(+0))$ is the mean value of a discontinuous function $f$ at $x=0$.
The spectrum and scattering  properties of this model were described in \cite{Heydarov:2005,GadellaNegroNietoPL2009}.

\section{Resonant Sets and Maps}
Since the potential $V_\eps$ has compact support shrinking to the origin, there is no loss of generality in supposing that the supports both of $\Phi$ and $\Psi$ are subsets of the interval $I=[-1,1]$.
Denote by $\mathcal{P}$  the class of real integrable and bounded functions of compact support contained in $I$.

\begin{defn}
    The \textit{resonant set} $\Lambda_\Phi$ of  potential $\Phi\in \mathcal{P}$ is the set of all real value $\alpha$ for which the operator $-\frac{d^2}{d s^2}+\alpha \Phi$ in $L_2(\Real)$ possesses a half-bound state.
\end{defn}

Suppose that a potential $q\in \mathcal{P}$ is resonant, i.e. $q$ possesses a half-bound state $u$. Then $u$, as a solution to the equation $-u''+qu=0$, is constant for $|s|>1$, because  $q$ is a zero function outside $I$.
Moreover, the restriction of $u$ to $I$ is a nontrivial solution of the problem $- u'' +qu= 0$, $s\in (-1,1)$ and $u'(-1)=0$, $u'(1)=0$. Hence, the potential $q$ is  resonant if and only if zero is an eigenvalue of the operator  $N=-\frac{d^2}{d s^2}+q$ in $L_2(I)$ subject to the Neumann boundary conditions at $s=\pm 1$.

Consequently the resonant set $\Lambda_\Phi$ coincides with the set of  eigenvalues of the problem
\begin{equation}\label{NeumanProblemWithAlpha}
     - u'' +\alpha \Phi u= 0, \quad s\in (-1,1),\qquad u'(-1)=0, \quad u'(1)=0
\end{equation}
with respect to the spectral parameter $\alpha$. In the case of a positive $\Phi$ it is clear that $\Lambda_\Phi$
is a countable subset of $\Real_+$ without finite accumulation points, and all eigenvalues of \eqref{NeumanProblemWithAlpha} are simple.
Otherwise, \eqref{NeumanProblemWithAlpha} is a  problem with indefinite  weight function \cite{CurgusLangerJDE:1989}.

Assume that $\Phi$ has only isolated turning points in $I$. This case  was considered in \cite{GolovatyManko:2009}. Then problem \eqref{NeumanProblemWithAlpha} can be associated with an operator in an appropriate Krein space.
Let $\mathcal{K}_\Phi$ be the weight $L_2$-space with the scalar product
$(f,g)=\int_{-1}^1f\overline{g}\,|\Phi|ds$ and the indefinite inner product  $[f,g]=(Jf,g)$, where  $Jf=\sgn\Phi \cdot f$.
The operator $J$ is called the fundamental symmetry.
We can introduce in $\mathcal{K}_\Phi$ the operator
\begin{multline*}
T=-\frac{1}{\Phi(s)}\frac{d^2}{ds^2},\quad \dmn T=\bigl\{g\in \mathcal{K}_\Phi\colon g\in
W_2^2(I),\\ \Phi^{-1}g''\in \mathcal{K}_\Phi,\; g'(-1)=0,\:g'(1)=0\bigr\}
\end{multline*}
that is  $J$-self-adjoint and $J$-nonnegative. The spectrum of  $T$  is real and discrete, and has the two accumulation points $-\infty$ and $+\infty$.  All nonzero eigenvalues are simple, and $\alpha=0$ is semi-simple, generically. The reader is referred to \cite{IohvidovKreinLanger} for the details of the theory.
Obviously, $\Lambda_\Phi=-\sigma(T)$. Hence, the resonant set $\Lambda_\Phi$ is discrete and unbounded in both directions.

Now suppose that the support of $\Phi$ is a disconnected subset of $I$. For the sake of
simplicity, assume that it has only one gap: $\supp \Phi=[-1,s_1]\cup[s_2,1]$ and  $s_1<s_2$.
Each solution $u$ of \eqref{NeumanProblemWithAlpha} is then a linear function on $[s_1,s_2]$. Therefore
$u'(s_2)=u'(s_1)$ and $u(s_2)-u(s_1)=l u'(s_1)$, where $l=s_2-s_1$ is the length of gap. Let us move the interval $[-1,s_1]$ up to $[s_2,1]$, identify the points $s_1$ and $s_2$, and thereafter rewrite  problem \eqref{NeumanProblemWithAlpha} as
\begin{gather}\label{GluedProbl}
\begin{gathered}
     - v'' +\alpha \Upsilon v= 0, \quad x\in (l-1,s_2)\cup(s_2,1),\quad
     v'(l-1)=0, \quad v'(1)=0,\\  v'(s_2+0)=v'(s_2-0),\quad v(s_2+0)-v(s_2-0)=l v'(s_2).
\end{gathered}
\end{gather}
The new ``glued'' potential $\Upsilon$ coincides with $\Phi$ on $[s_2,1]$,  $\Upsilon(s)=\Phi(s-l)$ for $s\in [l-1,s_2)$; hence $\Upsilon$ has a unique turning point $s=s_2$. The relation between solutions of problems \eqref{NeumanProblemWithAlpha} and \eqref{GluedProbl} is obviously given by
\begin{equation*}
    u(s)=
    \begin{cases}
        v(s+l)&\text{for } s\in [-1,s_1),\\
        v'(s_2)(s-s_1)+v(s_2-0)&\text{for } s\in [s_1,s_2],\\
        v(s)&\text{for } s\in (s_2,1].
    \end{cases}
\end{equation*}
As in the previous case, we can now construct an $J$-self-adjoint and $J$-nonnegative operator in $\mathcal{K}_\Upsilon$ associated with problem \eqref{GluedProbl} and derive the same properties of the resonant set $\Lambda_\Phi$. The similar considerations can be applied to  $\Phi$ with several gaps on its support.

In conclusion of this section, we introduce two characteristics of the potentials $\Phi$ and $\Psi$, which will turn out to be important for us later. Let us denote by $u_\alpha$ the half-bound state that corresponds to resonant potential $\alpha\Phi$. Clearly, $u_\alpha(\pm\infty)=u_\alpha(\pm 1)$.
Let $\theta$ be the map of $\Lambda_\Phi$ to $\Real$ such that
\begin{equation}\label{MapTheta}
\theta(\alpha)=\frac{u_\alpha(+1)}{u_\alpha(-1)}
\end{equation}
for all $\alpha\in \Lambda_\Phi$.  Next, let the map $\kappa$ is given by
\begin{equation}\label{MapKappa}
     \kappa(\alpha)=\frac{1}{u_\alpha(-1)u_\alpha(+1)}\int_{\Real}\Psi u_\alpha^2\,ds,\qquad\alpha\in \Lambda_\Phi.
\end{equation}
The value $\kappa(\alpha)$ describes the interaction of potentials $\Phi$ and $\Psi$
at the resonant $\alpha$. Since the half-bound state is  unique up to a scalar factor, both maps are well defined.

\begin{defn}
    We call $\theta\colon \Lambda_\Phi \to \Real$ the resonant map of $\Phi$, and $\kappa\colon \Lambda_\Phi \to \Real$ the intercoupling map for a pair of potentials $\Phi$ and $\Psi$.
\end{defn}

\section{Convergence of $S_\eps$ in Resonance Case}

In this section, we analyze more difficult resonant case where $\alpha$ is a point of the resonant set $\Lambda_\Phi$. Here and subsequently, $\theta_\alpha=\theta(\alpha)$ and $\kappa_\alpha=\kappa(\alpha)$,
where $\theta$ and $\kappa$ are the resonant and intercoupling maps for a given pair of $\Phi$ and $\Psi$. Let $u_\alpha$  be the  eigenfunction of \eqref{NeumanProblemWithAlpha} corresponding to $\alpha\in \Lambda_\Phi$ such that  $u_\alpha(-1) =1$.  From \eqref{MapTheta} and \eqref{MapKappa} it follows that
 $\theta_\alpha=u_\alpha(1)$ and
\begin{equation*}
    \kappa_\alpha=\theta_\alpha^{-1}\int_{-1}^1\Psi u_\alpha^2\,dt.
\end{equation*}
Denote by $S(\mu,\nu)$ the free Schr\"odinger operator on the line acting via $S(\mu,\nu)\phi=-\phi''$ on the domain
\begin{equation*}
    \dmn S(\mu,\nu) = \bigl\{ \phi \in W_2^2(\Real\setminus\{0\}) \colon \phi(+0) = \mu \phi(-0),
        \: \phi'(+0) = \mu^{-1} \phi'(-0)+\nu \phi(-0) \bigr\}.
\end{equation*}
For each real $\nu$ and $\mu\neq 0$ the operator~$S(\mu,\nu)$ is  self-adjoint.

\begin{thm}\label{ThmConvergenceAtResonance}
    Assume that $\Phi, \Psi \in \mathcal{P}$ and $\alpha$ belongs to the resonant set $\Lambda_\Phi$. Then the operator family $S_\eps$ defined by \eqref{Seps} converges to   $S(\theta_\alpha, \beta\kappa_\alpha)$ as $\eps\to 0$ in the norm resolvent sense.
\end{thm}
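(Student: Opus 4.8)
\emph{Proof plan.} It suffices to prove that $(S_\eps-\zeta)^{-1}\to(S_0-\zeta)^{-1}$ in operator norm for the single value $\zeta=\rmi$, where $S_0:=S(\theta_\alpha,\beta\kappa_\alpha)$; indeed, both operators being self-adjoint, $\rmi\in\rho(S_\eps)\cap\rho(S_0)$ and $\|(S_\eps-\rmi)^{-1}\|\le1$, and convergence of resolvents at one non-real point is equivalent to norm resolvent convergence. Hence I only need to show that, for every $f\in L_2(\Real)$, the solutions $y_\eps=(S_\eps-\rmi)^{-1}f$ and $y_0=(S_0-\rmi)^{-1}f$ satisfy $\|y_\eps-y_0\|\le c(\eps)\|f\|$ with $c(\eps)\to0$ independent of $f$.

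Because $V_\eps$ vanishes off $(-\eps,\eps)$, I would construct $y_\eps$ by patching. Let $g\in W_2^2(\Real)$ be the free resolvent, $-g''-\rmi g=f$, $\|g\|_{W_2^2}\le C\|f\|$. Then $y_\eps=g+c_-^\eps e_-$ on $(-\infty,-\eps)$ and $y_\eps=g+c_+^\eps e_+$ on $(\eps,\infty)$, with $e_\mp$ the exponential solutions of $-e''-\rmi e=0$ decaying at $\mp\infty$. On $(-\eps,\eps)$, passing to the fast variable $s=x/\eps$ and writing $y_\eps(x)=w_\eps(x/\eps)$, one gets $-\ddot w_\eps+\alpha\Phi w_\eps+\beta\eps\Psi w_\eps-\eps^2\rmi\,w_\eps=\eps^2 f(\eps s)$ on $I$; represent $w_\eps=\mu_\eps U^\eps+\lambda_\eps\widetilde U^\eps+p_\eps$, where $\{U^\eps,\widetilde U^\eps\}$ is the fundamental system of the homogeneous inner equation normalised by $U^\eps(-1)=1$, $\dot U^\eps(-1)=0$ and $\widetilde U^\eps(-1)=0$, $\dot{\widetilde U}^\eps(-1)=1$, and $p_\eps$ is the inhomogeneous solution with zero Cauchy data at $s=-1$, which satisfies $\|p_\eps\|_{C^1(I)}\le C\eps^{3/2}\|f\|$. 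Requiring $y_\eps$ to be $C^1$ at $x=\pm\eps$ turns into a $4\times4$ linear system for $(\mu_\eps,\lambda_\eps,c_-^\eps,c_+^\eps)$; once solved, the patched function lies in $W_2^2(\Real)=\dmn S_\eps$ and so equals $(S_\eps-\rmi)^{-1}f$.

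The key point — and the place where $\alpha\in\Lambda_\Phi$ is used — is the small-$\eps$ behaviour of $U^\eps,\widetilde U^\eps$ near the endpoints of $I$. By resonance, $U^\eps\to u_\alpha$ in $C^1(I)$, where $u_\alpha$ is the half-bound state with $u_\alpha(-1)=1$; thus $u_\alpha(1)=\theta_\alpha$ and, crucially, $\dot u_\alpha(\pm1)=0$. Regular perturbation yields $U^\eps=u_\alpha+\eps u_1+O(\eps^2)$ in $C^1(I)$ with $-\ddot u_1+\alpha\Phi u_1=-\beta\Psi u_\alpha$ and zero Cauchy data at $-1$, and Green's identity against $u_\alpha$ (using $\dot u_\alpha(\pm1)=0$) gives $\theta_\alpha\dot u_1(1)=\beta\int_{-1}^1\Psi u_\alpha^2\,ds=\beta\theta_\alpha\kappa_\alpha$, whence $\dot U^\eps(-1)=0$ and $\eps^{-1}\dot U^\eps(1)\to\beta\kappa_\alpha$. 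Similarly $\widetilde U^\eps\to\widetilde u_\alpha$ in $C^1(I)$ ($\widetilde u_\alpha$ the second solution with $\widetilde u_\alpha(-1)=0$, $\dot{\widetilde u}_\alpha(-1)=1$), and constancy of the Wronskian $U^\eps\dot{\widetilde U}^\eps-\dot U^\eps\widetilde U^\eps\equiv1$ forces $\dot{\widetilde U}^\eps(1)\to\theta_\alpha^{-1}$. Feeding these expansions into the matching system, one finds first that $\lambda_\eps=O(\eps\|f\|)$, and then that in the limit $\eps\to0$ the system forces the outer pair $(g+c_-^0e_-,\,g+c_+^0e_+)$ to satisfy the boundary conditions \eqref{ResZeroRangeConds} at the origin — the $\theta_\alpha$-relation coming from the leading term $u_\alpha$ and the $\beta\kappa_\alpha$-relation from the $u_1$-correction — so that $(\mu_\eps,\lambda_\eps,c_\pm^\eps)$ converges, at rate $O(\sqrt\eps)$, to the data of $y_0=(S_0-\rmi)^{-1}f$. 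The convergence is uniform in $f$ because each coefficient is an explicit ratio whose numerator is bounded by $\|g\|_{W_2^2}\le C\|f\|$ and whose common denominator, the matching determinant, tends to a nonzero limit — this last fact is exactly $\rmi\in\rho(S_0)$ — and hence stays bounded away from $0$ for small $\eps$.

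It then remains to add up: on $\{|x|>\eps\}$, $y_\eps-y_0=(c_\pm^\eps-c_\pm^0)e_\pm$ plus the short tails of $y_0$ over $(-\eps,0)\cup(0,\eps)$, with $L_2$-norms $O(\sqrt\eps\|f\|)$; on $\{|x|\le\eps\}$, $\|y_\eps\|_{L_2(-\eps,\eps)}\le\sqrt{2\eps}\,\|w_\eps\|_{C(I)}\le C\sqrt\eps\|f\|$ and likewise for $y_0$. Hence $\|y_\eps-y_0\|\le C\sqrt\eps\,\|f\|$, and the theorem follows. The step I expect to be the main obstacle is the uniform control of the inner problem: one must show that $U^\eps$, $\widetilde U^\eps$ and the particular-solution operator depend regularly on $\eps$ up to and including the endpoints $s=\pm1$ — so that the derivative data have the stated expansions, with the coefficient $\beta\kappa_\alpha$ pinned down by the Green-identity computation — and that the $4\times4$ matching determinant is bounded away from $0$ uniformly in small $\eps$. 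It is this non-degeneracy, equivalent to $\zeta\in\rho(S_0)$, that turns the pointwise limit $y_\eps\to y_0$ into convergence of the resolvents in operator norm; correspondingly, in the non-resonant case $\dot U^\eps(-1)\not\to0$, $\lambda_\eps$ cannot be kept $O(\eps)$, and the matching decouples into two Dirichlet half-line problems.
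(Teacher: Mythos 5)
Your proposal is correct, but it takes a genuinely different route from the paper's proof. The paper never solves the resolvent equation exactly: it builds a quasi-solution $\tilde y_\eps\in\dmn S_\eps$ by gluing $y=(S_0-\zeta)^{-1}f$ outside $[-\eps,\eps]$ to the inner profile $y(-\eps)u_\alpha(x/\eps)+\eps v_\eps(x/\eps)$, where $v_\eps$ solves the one-sided Cauchy problem \eqref{CPforVeps}, with a cut-off corrector $z_\eps$ absorbing the residual jumps at $x=\eps$; then $(S_\eps-\zeta)\tilde y_\eps=f+r_\eps$ with $\|r_\eps\|\le C\eps^{1/2}\|f\|$, and the crude bound $\|(S_\eps-\zeta)^{-1}\|\le|\Im\zeta|^{-1}$ finishes the argument, so no matching system and no non-degeneracy discussion ever appear. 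You instead compute $y_\eps=(S_\eps-\rmi)^{-1}f$ exactly by inner--outer matching, which is in effect the paper's scattering technique of Theorem~\ref{ThmConvScattData} (fundamental system normalized at $s=-1$, Cramer's rule, and the Green-identity asymptotics \eqref{AsympUeVeAt1}) transplanted to the resolvent equation; your relation $\theta_\alpha\dot u_1(1)=\beta\theta_\alpha\kappa_\alpha$ is exactly the computation behind \eqref{IntByPartsVepsUalpha} and \eqref{AsympUeVeAt1}, and after the rescaling $\lambda_\eps\mapsto\eps^{-1}\lambda_\eps$ your limit system is precisely the transmission problem defining $(S_0-\rmi)^{-1}f$, so its non-degeneracy is indeed equivalent to $\rmi\in\rho(S_0)$; both routes give the same $O(\eps^{1/2})$ rate uniformly in $f$. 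The trade-off: the paper's quasi-mode scheme sidesteps the two points you flag as obstacles (the $C^1$-up-to-the-endpoints dependence of $U^\eps,\widetilde U^\eps$ on $\eps$ --- which is nonetheless standard for linear equations with integrable coefficients depending polynomially on $\eps$, and is invoked in the paper only for the scattering theorem --- and the uniform invertibility of the $4\times4$ matching matrix), at the price of having to guess the inner ansatz; your scheme yields the exact resolvent and makes the limiting boundary conditions emerge mechanically from the degeneration of the matching system. When writing it up, do include the uniqueness step (the patched function equals $(S_\eps-\rmi)^{-1}f$ because the outer representation is forced by the $L_2$ condition and $S_\eps$ is self-adjoint) and a one-line perturbation lemma for the linear system (matrix entries converging at rate $O(\eps)$, right-hand sides at rate $O(\eps^{1/2}\|f\|)$, limit matrix invertible), so that the uniformity in $f$ is explicit.
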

\medskip

We have divided the proof into a sequence of propositions.
Fix an arbitrary $f\in L_2(\Real)$ and $\zeta\in \mathbb{C}$ with $\Im \zeta\neq 0$.
The basic idea of the proof is to construct a fair approximation to the function $y_\eps=(S_\eps-\zeta)^{-1}f$, uniformly for $f$ in bounded subsets of $L_2(\Real)$.

In the sequel, letters $C_j$, $c_j$ and $b_j$ denote various posi\-ti\-ve constants independent of~$\eps$ and $f$, whose values might be different in different proofs, and $\|f\|$ stands for the $L_2(\Real)$-norm of a function~$f$.
For abbreviation, we let $S_0$ stand for $S(\theta_\alpha, \beta\kappa_\alpha)$.

Set $y=(S_0-\zeta)^{-1}f$.
We  show that $y$ is a very satisfactory approximation to $y_\eps$ for $|x|>\eps$. The problem of choosing a close approximation to $y_\eps$ on the support of  potential $V_\eps$ is more subtle.
Denote by $v_\eps$ the solution of the Cauchy problem
  \begin{equation}\label{CPforVeps}
  \begin{cases}
     -v_\eps''+\alpha \Phi(s)v_\eps=\eps f(\eps s)-\beta y(-\eps)\Psi(s)u_\alpha(s),\qquad s\in(-1,1),\\
    \phantom{-} v_\eps(-1)=0, \quad v'_\eps(-1)=y'(-\eps).
  \end{cases}
  \end{equation}
Clearly, we have $v_\eps\in W^2_2(-1,1)$ for any $\eps>0$. The next proposition establishes some asymptotic properties of $v_\eps$ as $\eps\to 0$.

\begin{prop} \label{PropVeps}
The following holds for any $f\in L_2(\Real)$ and $\eps\in (0,1)$
\begin{equation*}
\|v_\eps\|_{W^2_2(-1,1)}\leq C_1 \|f\|,\qquad |v_\eps'(1)-y'(+0)|\leq C_2 \eps^{1/2}\|f\|.
\end{equation*}
\end{prop}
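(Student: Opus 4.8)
The plan is to analyze the Cauchy problem \eqref{CPforVeps} by splitting $v_\eps$ into a part governed by the resonant potential $\alpha\Phi$ alone and a correction coming from the right-hand side. First I would introduce the pair of solutions of the homogeneous Neumann problem: the half-bound state $u_\alpha$ (with $u_\alpha(-1)=1$, $u_\alpha'(-1)=0$) and a second, linearly independent solution $w_\alpha$ of $-w''+\alpha\Phi w=0$ normalized by, say, $w_\alpha(-1)=0$, $w_\alpha'(-1)=1$; these form a fundamental system on $(-1,1)$ with Wronskian identically $1$. Because $\alpha\in\Lambda_\Phi$ is a resonance, $u_\alpha$ is bounded and $u_\alpha'(1)=0$, whereas $w_\alpha'(1)\ne 0$ (otherwise $0$ would be a multiple Neumann eigenvalue, which is impossible for a Sturm--Liouville problem). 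This last fact is what lets one control $v_\eps'(1)$.

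Next, using variation of parameters, I would write the exact solution of \eqref{CPforVeps} meeting the initial data $v_\eps(-1)=0$, $v_\eps'(-1)=y'(-\eps)$ as
\begin{equation*}
v_\eps(s)=y'(-\eps)\,w_\alpha(s)+\int_{-1}^{s}\bigl(w_\alpha(s)u_\alpha(t)-u_\alpha(s)w_\alpha(t)\bigr)
\bigl(\eps f(\eps t)-\beta y(-\eps)\Psi(t)u_\alpha(t)\bigr)\,dt.
\end{equation*}
From here the $W_2^2(-1,1)$ bound is essentially routine: $|y'(-\eps)|\le\|y'\|_{L^\infty}\le C\|y\|_{W_2^2}\le C\|f\|$ and $|y(-\eps)|\le C\|f\|$ since $y=(S_0-\zeta)^{-1}f$ and the resolvent is bounded into $W_2^2(\Real\setminus\{0\})\hookrightarrow L^\infty$; the integral term is bounded because $u_\alpha,w_\alpha$ are fixed bounded functions, $\Phi,\Psi$ are bounded with support in $I$, and $\|\eps f(\eps\,\cdot)\|_{L_1(-1,1)}\le\eps^{1/2}\|f\|$ by Cauchy--Schwarz. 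Differentiating the formula once more and using the equation to express $v_\eps''$ gives the $L_2$ bound on $v_\eps''$, hence the full $W_2^2$ estimate with a constant independent of $\eps$ and $f$.

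For the second, sharper estimate I would evaluate the representation at $s=1$. Since $u_\alpha'(1)=0$, differentiating the formula and setting $s=1$ kills all terms containing $u_\alpha'(1)$, leaving
\begin{equation*}
v_\eps'(1)=y'(-\eps)\,w_\alpha'(1)+w_\alpha'(1)\int_{-1}^{1}u_\alpha(t)\bigl(\eps f(\eps t)-\beta y(-\eps)\Psi(t)u_\alpha(t)\bigr)\,dt.
\end{equation*}
Now I would show this matches $y'(+0)$ up to $O(\eps^{1/2})$. The term $\eps\int_{-1}^1 u_\alpha(t)f(\eps t)\,dt$ is $O(\eps^{1/2}\|f\|)$ by Cauchy--Schwarz, so it goes into the error. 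For the remaining part, $y'(-\eps)=y'(-0)+O(\eps)\|f\|$ (as $y'$ is Lipschitz away from $0$; more precisely Hölder-$1/2$, which still gives $O(\eps^{1/2})$), and $y(-\eps)=y(-0)+O(\eps^{1/2})\|f\|$. So modulo $O(\eps^{1/2}\|f\|)$,
\begin{equation*}
v_\eps'(1)\approx w_\alpha'(1)\Bigl(y'(-0)-\beta y(-0)\int_{-1}^1\Psi u_\alpha^2\,dt\Bigr)
= w_\alpha'(1)\bigl(y'(-0)-\beta\theta_\alpha\kappa_\alpha\, y(-0)\bigr).
\end{equation*}
The main obstacle, and the crux of the whole argument, is then identifying the constant $w_\alpha'(1)$ and the bracket with $y'(+0)$: this requires knowing that the limit operator $S_0=S(\theta_\alpha,\beta\kappa_\alpha)$ was defined precisely so that its resolvent output $y$ satisfies $\theta_\alpha y'(+0)=y'(-0)+\beta\kappa_\alpha y(-0)$ and $y(+0)=\theta_\alpha y(-0)$, together with the consistency relation $w_\alpha'(1)=\theta_\alpha=u_\alpha(1)$ (which follows from the Wronskian identity $u_\alpha(1)w_\alpha'(1)-u_\alpha'(1)w_\alpha(1)=1$ once one checks $u_\alpha'(1)=0$ forces $w_\alpha'(1)=1/u_\alpha(1)$... — here one must be careful with the normalization and I would fix it so that the bookkeeping closes). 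Pinning down these constants correctly, and confirming that the $O(\eps)$ versus $O(\eps^{1/2})$ Hölder estimates for $y$ near the origin are what is actually available, is where the real work lies; everything else is Gronwall-type bookkeeping.
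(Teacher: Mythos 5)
Your overall strategy is sound and close in spirit to the paper's (the paper simply multiplies the equation \eqref{CPforVeps} by $u_\alpha$ and integrates by parts twice, which produces the exact identity
$v'_\eps(1)=\theta_\alpha^{-1} y'(-\eps)+\beta \kappa_\alpha y(-\eps)-\eps \theta_\alpha^{-1}\int_{-1}^1 f(\eps s)\, u_\alpha(s)\,ds$
without ever introducing a second fundamental solution), and your $W_2^2$ bound and the H\"older-$1/2$ estimates for $y$ near the origin are exactly what is needed. But the crucial second step, as you wrote it, does not close, and you have left it unresolved at precisely the point where the proposition lives. Two concrete problems: (i) the variation-of-parameters kernel has the wrong sign for the equation $-v''+\alpha\Phi v=g$; with your kernel $w_\alpha(s)u_\alpha(t)-u_\alpha(s)w_\alpha(t)$ and a plus sign one solves $-v''+\alpha\Phi v=-g$, so the integral term must enter with a minus sign. (ii) The constant is $w_\alpha'(1)=1/u_\alpha(1)=\theta_\alpha^{-1}$ (from the Wronskian and $u_\alpha'(1)=0$), not $\theta_\alpha$ as you also assert. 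These are not harmless bookkeeping slips: with your stated signs the evaluation at $s=1$ gives
\begin{equation*}
v_\eps'(1)\approx \theta_\alpha^{-1}\bigl(y'(-0)-\beta\,\theta_\alpha\kappa_\alpha\,y(-0)\bigr)
=\theta_\alpha^{-1}y'(-0)-\beta\kappa_\alpha y(-0),
\end{equation*}
which differs from $y'(+0)=\theta_\alpha^{-1}y'(-0)+\beta\kappa_\alpha y(-0)$ by $2\beta\kappa_\alpha y(-0)$, a quantity that is not $O(\eps^{1/2})\|f\|$ in general; no choice of the constant $w_\alpha'(1)$ can repair a wrong sign on the $\beta$-term. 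So as written the matching with the boundary condition of $S_0$ fails, and your closing remark that you ``would fix it so that the bookkeeping closes'' is exactly the content that must be supplied.

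The fix is straightforward: with the corrected kernel one gets
\begin{equation*}
v_\eps'(1)=w_\alpha'(1)\Bigl(y'(-\eps)-\eps\int_{-1}^1 u_\alpha(t) f(\eps t)\,dt+\beta y(-\eps)\int_{-1}^1\Psi u_\alpha^2\,dt\Bigr),
\end{equation*}
and with $w_\alpha'(1)=\theta_\alpha^{-1}$, $\int_{-1}^1\Psi u_\alpha^2\,dt=\theta_\alpha\kappa_\alpha$ this is exactly the paper's identity \eqref{IntByPartsVepsUalpha}; the remainder of your argument (Cauchy--Schwarz for the $f$-term, H\"older-$1/2$ continuity of $y$ and $y'$ away from $0$, the boundary conditions defining $S(\theta_\alpha,\beta\kappa_\alpha)$) then yields the bound $|v_\eps'(1)-y'(+0)|\le C_2\eps^{1/2}\|f\|$. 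Once corrected, your route is a legitimate alternative, though heavier than the paper's: the integration-by-parts trick against $u_\alpha$ alone makes the second solution $w_\alpha$, its normalization, and the nondegeneracy claim $w_\alpha'(1)\neq0$ unnecessary.
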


\begin{proof}
We first observe that $(S_0-\zeta)^{-1}$ is a bounded operator from~$L_2(\Real)$ to the domain of~$S_0$ equipped with the graph norm. The latter space is a subspace of $W_2^2(\Real\setminus \{0\})$, and therefore
    \begin{equation}\label{EstY}
    \|y\|_{W_2^2(\Real\setminus \{0\})}\leq c_1\|f\|,\qquad \|y\|_{C^1(\Real\setminus \{0\})}\leq c_2\|f\|,
\end{equation}
since $W_2^2(\Real\setminus \{0\})\subset C^1(\Real\setminus \{0\})$ by the Sobolev embedding theorem. Hence,
\begin{equation*}
    \|v_\eps\|_{W^2_2(-1,1)}\leq c_3\bigl(|y(-\eps)|+|y'(-\eps)|+\eps\|f(\eps\,\cdot\,)\|_{L_2(-1,1)}\bigr)\leq C_1\|f\|,
\end{equation*}
because  there exists a constant $c_4$  such that
\begin{equation}\label{EstF(eps)}
\|f(\eps\cdot)\|_{L_2(-1,1)}\leq c_4\eps^{-1/2} \|f\|.
\end{equation}
Next, multiplying equation \eqref{CPforVeps} by the eigenfunction $u_\alpha$ and integrating by parts yield
\begin{equation}\label{IntByPartsVepsUalpha}
     v'_\eps(1)=\theta_\alpha^{-1} y'(-\eps)+\beta \kappa_\alpha y(-\eps)-\eps \theta_\alpha^{-1}\int_{-1}^1 f(\eps s)\, u_\alpha(s)\,ds.
\end{equation}
Recall that the function $y$ satisfies the condition $y'(+0) = \theta_\alpha^{-1}y'(-0)+ \beta \kappa_\alpha y(-0)$.
Subtracting this equality from \eqref{IntByPartsVepsUalpha} we finally obtain
\begin{multline*}
     \bigl|v'_\eps(1)-y'(+0)\bigr|\leq |\theta_\alpha^{-1}|\,|y'(-\eps)-y'(-0)|
    +|\beta|\, |\kappa_\alpha|\, \bigl|y(-\eps)-y(-0)\bigr|\\+\eps |\theta_\alpha^{-1}|\,\|f(\eps \,\cdot\,)\|_{L_2(-1,1)} \|u_\alpha\|_{L_2(-1,1)}\leq C_2\eps^{1/2} \|f\|
\end{multline*}
in view of \eqref{EstF(eps)} and the following estimates
\begin{equation}\label{EstY(Eps)-Y(0)}
\bigr|y^{(k)}(\pm\eps)-y^{(k)}(\pm 0)\bigl|\leq \left|\int_0^{\pm\eps}|y^{(k+1)}(x)|\,dx\right|
    \leq c_6\eps^{1/2} \|y\|_{W_2^2(\Real\setminus \{0\})}    \leq c_7\eps^{1/2}\|f\|,
\end{equation}
holding for $k=0,1$.
\end{proof}

Let us introduce the function $w_\eps$ such that $w_\eps(x)=y(x)$ for $|x|>\eps$ and
$w_\eps(x)=y(-\eps)u_\alpha(x/\eps)+\eps v_\eps(x/\eps)$ for $|x|\leq\eps$.
By construction, $w_\eps$ belongs to $W_2^2(\Real\setminus\{\eps\})$.  Although $w_\eps$ is in general discontinuous at the point $x=\eps$, its jump and the jump of its first derivative at this point are small.
Indeed, $[w_\eps]_{x=\eps}=y(\eps)-\theta_\alpha y(-\eps)-\eps v_\eps(1)$ and $[w'_\eps]_{x=\eps}=y'(\eps)- v'_\eps(1)$,
where $[h]_{x=a}=h(a+0)-h(a-0)$ is a jump of a function $h$ at $x=a$.
Therefore, taking into account  Proposition~\ref{PropVeps},  estimates \eqref{EstY(Eps)-Y(0)} and the equality $y(+0) = \theta_\alpha y(-0)$, we see that the jumps can be bounded as
\begin{equation}\label{EstJumps}
\begin{aligned}
    &\bigl|[w_\eps]_{x=\eps}\bigr|\leq |y(\eps)-y(+0)|+|\theta_\alpha| |y(-\eps)-y(-0)|+ \eps \|v_\eps\|_{W^2_2(-1,1)}\leq b_1\eps^{1/2} \|f\|,\\
    &\bigl|[w'_\eps]_{x=\eps}\bigr|\leq |y'(\eps)-y'(+0)|+ |y'(+0)-v'_\eps(1)|\leq b_2\eps^{1/2} \|f\|.
\end{aligned}
\end{equation}

\begin{figure}[b]
\begin{center}
\includegraphics[scale=1.2]{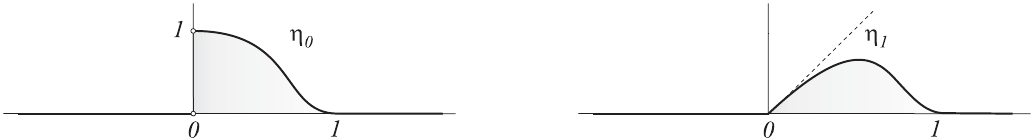}
\end{center}\caption{\label{Jump}Plots of the functions with the prescribed jumps at the origin}
\end{figure}

Let us introduce functions $\eta_0$ and $\eta_1$  that are smooth outside the origin, have compact supports contained in $[0,1]$, and have the prescribed jumps
$[\eta_0]_{x=0}=1$, $[\eta_0']_{x=0}=0$ and $[\eta_1]_{x=0}=0$, $[\eta_1']_{x=0}=1$ (see Fig.~\ref{Jump}).
Set $z_\eps(x)= [w_\eps]_{x=\eps}\,\eta_0(x-\eps)+[w'_\eps]_{x=\eps}\,\eta_1(x-\eps)$;
then in view of \eqref{EstJumps}
\begin{equation}\label{ZetaEstim}
    \max_{x\geq\eps}|z^{(k)}_\eps(x)|\leq b_3\eps^{1/2}\|f\|
\end{equation}
for some $b_3$ and $k=0,1,2$. Additionally, $z_\eps =0$ on~$(-\infty,\eps)$.

Clearly, the function
\begin{equation}\label{YepsTilda}
    \tilde{y}_\eps(x)=
    \begin{cases}
        y(x)-z_\eps(x) & \text{if } |x|>\eps,\\
        y(-\eps)u_\alpha(\frac x\eps)+\eps v_\eps(\frac x\eps) & \text{if } |x|\leq\eps
    \end{cases}
\end{equation}
is continuous on~$\Real$ along with its derivative and belongs to $\dmn S_\eps$.
\begin{prop}\label{PropYtildaEst}
    Fix $\zeta \in \mathbb{C}$ with $\Im \zeta \ne0$. Then the estimates
    \begin{equation}\label{EstYtilda}
        \|y_\eps-\tilde{y}_\eps\|\leq C_1\eps^{1/2}|\|f\|,\qquad
        \|\tilde{y}_\eps-y\|\leq C_2\eps^{1/2}\|f\|
    \end{equation}
hold for each $f\in L_2(\Real)$ and $\eps>0$, where $y_\eps=(S_\eps-\zeta)^{-1}f$ and $y=(S_0-\zeta)^{-1}f$.
\end{prop}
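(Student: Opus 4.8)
The plan is to estimate the two differences separately, starting with the easier second bound $\|\tilde y_\eps - y\|\le C_2\eps^{1/2}\|f\|$. On the set $|x|>\eps$ we have $\tilde y_\eps - y = -z_\eps$, and by \eqref{ZetaEstim} the function $z_\eps$ is pointwise $O(\eps^{1/2}\|f\|)$ and supported in $[\eps,1]$, so its $L_2$-norm over $|x|>\eps$ is $O(\eps^{1/2}\|f\|)$. On $|x|\le\eps$ we simply bound $\|\tilde y_\eps\|_{L_2(-\eps,\eps)}$ and $\|y\|_{L_2(-\eps,\eps)}$: the latter is $\le (2\eps)^{1/2}\|y\|_{C^0}\le c\eps^{1/2}\|f\|$ by \eqref{EstY}, and after the substitution $x=\eps s$ the former equals $\eps^{1/2}\|y(-\eps)u_\alpha + \eps v_\eps\|_{L_2(-1,1)}$, which is $O(\eps^{1/2}\|f\|)$ by Proposition~\ref{PropVeps} together with $|y(-\eps)|\le c\|f\|$. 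Collecting these gives the second estimate.

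For the first estimate, the idea is the standard resolvent trick: since $\tilde y_\eps\in\dmn S_\eps$, write $g_\eps := (S_\eps-\zeta)\tilde y_\eps$, so that $y_\eps - \tilde y_\eps = (S_\eps-\zeta)^{-1}(f - g_\eps)$, and hence $\|y_\eps-\tilde y_\eps\| \le |\Im\zeta|^{-1}\,\|f - g_\eps\|$. Thus it suffices to show $\|f - g_\eps\| = O(\eps^{1/2}\|f\|)$. I would compute $g_\eps$ piecewise. On $|x|>\eps$ the potential $V_\eps$ vanishes, so $(S_\eps - \zeta)\tilde y_\eps = -\tilde y_\eps'' - \zeta\tilde y_\eps = (S_0-\zeta)y - (-z_\eps'' - \zeta z_\eps) = f - (-z_\eps''-\zeta z_\eps)$, and the correction term is $O(\eps^{1/2}\|f\|)$ in $L_2(|x|>\eps)$ by \eqref{ZetaEstim}. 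On $|x|\le\eps$, using $\tilde y_\eps(x) = y(-\eps)u_\alpha(x/\eps) + \eps v_\eps(x/\eps)$ and the chain rule, $-\tilde y_\eps'' = \eps^{-2}\bigl(-y(-\eps)u_\alpha''(s) - \eps v_\eps''(s)\bigr)$ with $s = x/\eps$; invoking the equation $-u_\alpha'' + \alpha\Phi u_\alpha = 0$ and the Cauchy problem \eqref{CPforVeps} for $v_\eps$, the terms $\eps^{-2}\alpha\Phi$ and $\eps^{-1}\beta\Psi$ precisely cancel against $V_\eps\tilde y_\eps$, leaving $(S_\eps-\zeta)\tilde y_\eps = f(x) - \zeta\tilde y_\eps(x)$ on $|x|\le\eps$ (here one also uses $\eps\cdot\eps^{-1}f(\eps s)=f(x)$ and that the $\beta y(-\eps)\Psi u_\alpha$ term in \eqref{CPforVeps} cancels the $\eps^{-1}\beta\Psi$ acting on the $y(-\eps)u_\alpha$ part of $\tilde y_\eps$). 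So on $|x|\le\eps$ we get $f - g_\eps = \zeta\tilde y_\eps$, whose $L_2(-\eps,\eps)$-norm is $O(\eps^{1/2}\|f\|)$ exactly as in the previous paragraph.

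Finally I would assemble: $\|f-g_\eps\|^2 = \|(-z_\eps''-\zeta z_\eps)\|_{L_2(|x|>\eps)}^2 + \|\zeta\tilde y_\eps\|_{L_2(-\eps,\eps)}^2 \le C\eps\|f\|^2$, hence $\|y_\eps-\tilde y_\eps\| \le |\Im\zeta|^{-1}\|f-g_\eps\| \le C_1\eps^{1/2}\|f\|$. The main obstacle — really the only place demanding care — is the bookkeeping on $|x|\le\eps$: one must verify that after scaling $x=\eps s$ every singular power of $\eps$ produced by differentiating $u_\alpha(x/\eps)$ and $\eps v_\eps(x/\eps)$ is annihilated by the corresponding term of $V_\eps$, so that the residual $f - g_\eps$ is genuinely bounded (indeed small) rather than blowing up like $\eps^{-1}$ or $\eps^{-2}$; this is precisely why $\tilde y_\eps$ was built from the half-bound state $u_\alpha$ and the tailored Cauchy data in \eqref{CPforVeps}. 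Once that cancellation is checked, the remaining estimates are immediate consequences of \eqref{EstY}, \eqref{ZetaEstim}, and Proposition~\ref{PropVeps}.
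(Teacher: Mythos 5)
Your strategy is the same as the paper's: compute the residual $r_\eps=(S_\eps-\zeta)\tilde y_\eps-f$ piecewise, bound it in $L_2$ by $O(\eps^{1/2}\|f\|)$, and use $\|(S_\eps-\zeta)^{-1}\|\le|\Im\zeta|^{-1}$, while the second estimate follows from \eqref{ZetaEstim} and a scaling of the middle zone exactly as you describe. However, in the step you yourself single out as the only delicate one, the bookkeeping on $|x|\le\eps$ is not quite right: the cancellation is \emph{not} complete. Sorting $(S_\eps-\zeta)\tilde y_\eps$ by powers of $\eps$, the $\eps^{-2}$ bracket vanishes by $-u_\alpha''+\alpha\Phi u_\alpha=0$, and the $\eps^{-1}$ bracket $-v_\eps''+\alpha\Phi v_\eps+\beta y(-\eps)\Psi u_\alpha$ equals $\eps f(\eps s)$ by \eqref{CPforVeps}; but the piece of the potential $\beta\eps^{-1}\Psi(x/\eps)$ acting on the $\eps v_\eps(x/\eps)$ part of $\tilde y_\eps$ produces the order-one term $\beta\Psi(x/\eps)\,v_\eps(x/\eps)$, which nothing in \eqref{CPforVeps} removes. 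Hence on $|x|\le\eps$ one has
\begin{equation*}
f-(S_\eps-\zeta)\tilde y_\eps=\zeta\,\tilde y_\eps(x)-\beta\Psi\bigl(\tfrac x\eps\bigr)v_\eps\bigl(\tfrac x\eps\bigr),
\end{equation*}
not $\zeta\tilde y_\eps$ alone; this is precisely the term $\bigl(\beta\Psi(\tfrac x\eps)-\eps\zeta\bigr)v_\eps(\tfrac x\eps)$ in the paper's formula for $r_\eps$.

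The slip is harmless for the conclusion: since $\Psi$ is bounded (it lies in the class $\mathcal{P}$), the change of variables $x=\eps s$ gives $\|\Psi(\cdot/\eps)\,v_\eps(\cdot/\eps)\|_{L_2(-\eps,\eps)}\le \eps^{1/2}\|\Psi\|_{L_\infty}\|v_\eps\|_{L_2(-1,1)}\le C\eps^{1/2}\|f\|$ by Proposition~\ref{PropVeps}, so the residual retains the size $O(\eps^{1/2}\|f\|)$ and your final assembly goes through unchanged. You should simply retain this term in $r_\eps$ and bound it as above (note also that boundedness of $\Psi$ is genuinely used here); with that correction your argument coincides with the paper's proof.
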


\begin{proof}
It is convenient now to rewrite the approximation $\tilde{y}_\eps$ in the form
\begin{equation*}\textstyle
    \tilde{y}_\eps(x)=(1-\chi_\eps(x))y(x)+y(-\eps)u_\alpha(\frac x\eps)+\eps v_\eps(\frac x\eps)-z_\eps(x),
\end{equation*}
where  $\chi_\eps$ is the characteristic function of $[-\eps,\eps]$, and
$u_\alpha$ and $v_\eps$ are extended by zero to the whole line.
Recalling the definition of  $y$, $u_\alpha$ and $v_\eps$, we deduce:
\begin{align*}
    &\textstyle(S_\eps-\zeta)\tilde{y}_\eps(x)=
    \bigl(-\frac{d^2}{dx^2}-\zeta\bigr)( y(x)-z_\eps(x))=f(x)+z''_\eps(x)+\zeta z_\eps(x)\\
    \intertext{for $|x|>\eps$, and}
&\begin{aligned}\textstyle
(S_\eps-&\zeta)\tilde{y}_\eps(x)=\textstyle
    \Bigl(-\frac{d^2}{dx^2}+\alpha\eps^{-2}\Phi(\eps^{-1}x)+\beta\eps^{-1}\Psi(\eps^{-1}x)-\zeta\Bigr)
    \Bigl( y(-\eps)u_\alpha(\frac x\eps)+\eps v_\eps(\frac x\eps)\Bigr)
\\&\textstyle
= \eps^{-2}
       y(-\eps)\Bigl\{-u''_\alpha+\alpha \Phi(\frac x\eps)u_\alpha\Bigr\}
+\eps^{-1}
      \Bigl\{ -v_\eps''+\alpha \Phi(\frac x\eps)v_\eps+\beta y(-\eps)\Psi(\frac x\eps)u_\alpha\Bigr\}
\\&\textstyle
+\beta \Psi(\frac x\eps) v_\eps(\frac x\eps)-\zeta \tilde{y}_\eps(x)
    =f(x)+\bigl(\beta \Psi(\frac x\eps)-\zeta\bigr) v_\eps(\frac x\eps)-\zeta y(-\eps)u_\alpha(\frac x\eps)
\end{aligned}
\end{align*}
for $|x|\leq\eps$.
Therefore $(S_\eps-\zeta)\tilde{y}_\eps=f+r_{\eps}$, where
\begin{equation*}
    r_\eps(x)=
    \begin{cases}
       z''_\eps(x)+\zeta z_\eps(x) & \text{if } |x|>\eps,\\
        \bigl(\beta \Psi(\frac x\eps)-\eps\zeta\bigr) v_\eps(\frac x\eps)-\zeta y(-\eps)u_\alpha(\frac x\eps) & \text{if } |x|\leq\eps.
    \end{cases}
\end{equation*}
Hence $\tilde{y}_\eps -y_\eps =(S_\eps-\zeta)^{-1}r_{\eps}$, and from this we conclude
\begin{equation*}
    \|y_\eps-\tilde{y}_\eps\|\leq \|(S_\eps-\zeta)^{-1}\|\, \|r_{\eps}\|
    \leq |\Im\zeta|^{-1}\|r_{\eps}\|.
\end{equation*}
We can now employ Proposition~\ref{PropVeps} and estimates \eqref{EstY}, \eqref{ZetaEstim} to derive the bound
\begin{multline*}
    \|r_{\eps}\|\leq
    c_1\|z_\eps''+\zeta z_\eps\|_{L_2(\eps,1+\eps)}
    +c_2\|v_\eps(\eps^{-1}\,\cdot\,)\|_{L_2(-\eps,\eps)}
    +c_3|y(-\eps)|\,\|u_\alpha(\eps^{-1}\,\cdot\,)\|_{L_2(-\eps,\eps)}
\\
    \leq c_4 \max_{x\geq\eps}(|z_\eps|+|z''_\eps|)
    +c_5 \eps^{1/2}\bigl(\|v_\eps\|_{L_2(-1,1)}+\|y\|_{C(\Real\setminus\{0\})}\|u_\alpha\|_{L_2(-1,1)}\bigr)
    \leq c_6\eps^{1/2}\|f\|.
\end{multline*}
This proves the first inequality in \eqref{EstYtilda}. Similarly,
\begin{multline*}
    \|\tilde{y}_\eps-y\|=\|y(-\eps)u_\alpha(\eps^{-1}\,\cdot\,)+\eps v_\eps(\eps^{-1}\,\cdot\,)-z_\eps
    -\chi_\eps y\|
    \leq c_7\eps^{1/2}|y(-\eps)|\|u_\alpha\|_{L_2(-1,1)}\\
    +c_8\eps^{3/2}\|v_\eps\|_{L_2(-1,1)}
    +c_9\max_{x\geq\eps}|z_\eps|
    +c_{10}\|y\|_{C(\Real\setminus\{0\})}\|\chi_\eps\|
    \leq c_{11}\eps^{1/2}\|f\|,
\end{multline*}
and so finish the proof.
\end{proof}

\begin{proof}[Proof of Theorem~\ref{ThmConvergenceAtResonance}]
For each $f\in L_2(\Real)$ and $\zeta\in \mathbb{C}$ with $\Im \zeta\neq 0$ we can construct the approximation $\tilde{y}_\eps$ to $y_\eps=(S_\eps-\zeta)^{-1}f$ given by \eqref{YepsTilda}.
As above, set $y=(S_0-\zeta)^{-1}f$. Applying Proposition~\ref{PropYtildaEst}, we discover
\begin{multline*}
    \|(S_\eps-\zeta)^{-1}f-(S_0-\zeta)^{-1}f\|=\|y_\eps-y\|\leq\|\tilde{y}_\eps-y_\eps\|+\|\tilde{y}_\eps-y\|
     \leq C \eps^{1/2}\|f\|,
\end{multline*}
which establishes the norm resolvent convergence of  $S_\eps$ to the operator $S(\theta_\alpha, \beta\kappa_\alpha)$.
\end{proof}

\section{Convergence of $S_\eps$ in Non-Resonance Case}

Now we study the non-resonant case when $\alpha$ does not belong to the resonant set $\Lambda_\Phi$.
Denote by $S_0$  the direct sum $S_-\oplus S_+$ of the unperturbed half-line Schr\"odinger operators
$S_\pm= -d^2/d x^2$ on~$\Real_\pm$ subject to the Dirichlet boundary condition at $x=0$. Hence
$\dmn S_0 = \{ y \in W_2^2(\Real\setminus\{0\}) \colon y(-0) =  y(+0)=0\}$.

\begin{thm}\label{ThmConvergenceNonResonant}
     If $\alpha\not\in \Lambda_\Phi$, then  the operator family $S_\eps$ given by \eqref{Seps} converges to  the direct sum $S_-\oplus S_+$, as $\eps\to 0$, in the norm resolvent sense.
\end{thm}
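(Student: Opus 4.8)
The plan is to repeat the scheme of the proof of Theorem~\ref{ThmConvergenceAtResonance}, the only structural change being that the half-bound state $u_\alpha$ — which is unavailable when $\alpha\notin\Lambda_\Phi$ — is replaced by the solution of an auxiliary Neumann boundary value problem on $(-1,1)$, which is uniquely and stably solvable precisely because $\alpha$ is non-resonant. Fix $f\in L_2(\Real)$ and $\zeta\in\mathbb C$ with $\Im\zeta\neq0$, and set $y=(S_0-\zeta)^{-1}f$. Then $-y''-\zeta y=f$ on $\Real\setminus\{0\}$, $y(-0)=y(+0)=0$, and, arguing as for \eqref{EstY}, $\|y\|_{W_2^2(\Real\setminus\{0\})}+\|y\|_{C^1(\Real\setminus\{0\})}\le c\|f\|$; in particular $|y(\pm\eps)|\le c\eps\|f\|$ and $|y'(\pm\eps)|\le c\|f\|$ for $\eps\in(0,1)$. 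The reason the origin now carries a Dirichlet (rather than a transmission) condition is that, without a half-bound state, the barrier $\alpha\eps^{-2}\Phi(\eps^{-1}\cdot)$ is opaque, so the inner corrector will have amplitude of order $\eps$ only.

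First I would introduce $v_\eps\in W_2^2(-1,1)$ as the unique solution of
\begin{equation*}
  -v_\eps''+\alpha\Phi(s)\,v_\eps=\eps f(\eps s),\quad s\in(-1,1),\qquad v_\eps'(-1)=y'(-\eps),\quad v_\eps'(1)=y'(\eps).
\end{equation*}
Since $\alpha\notin\Lambda_\Phi$, zero is not an eigenvalue of $-\frac{d^2}{ds^2}+\alpha\Phi$ on $L_2(-1,1)$ with Neumann conditions, so this problem is uniquely solvable and its solution operator is bounded, with a norm depending only on $\alpha$ and $\Phi$; combined with the scaling estimate $\|f(\eps\,\cdot\,)\|_{L_2(-1,1)}\le c\eps^{-1/2}\|f\|$, this yields $\|v_\eps\|_{W_2^2(-1,1)}\le C_1\|f\|$, and hence $\|v_\eps\|_{C^1(-1,1)}\le C_1'\|f\|$, uniformly in $\eps$.

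Next I would put $w_\eps(x)=y(x)$ for $|x|>\eps$ and $w_\eps(x)=\eps v_\eps(x/\eps)$ for $|x|\le\eps$. By the choice of the Neumann data, $w_\eps'$ is already continuous at $x=\pm\eps$, while the value-jumps satisfy $|[w_\eps]_{x=\pm\eps}|\le|\eps v_\eps(\pm1)|+|y(\pm\eps)|\le b_1\eps\|f\|$; note that, unlike in the resonant case, no $\eta_1$-type corrector is needed here. Removing the two value-jumps by a corrector $z_\eps$ built from a single profile $\eta_0$ with $[\eta_0]_{x=0}=1$, $[\eta_0']_{x=0}=0$, $\supp\eta_0\subset[0,1]$ — chosen so that $\supp z_\eps\subset\{\eps\le|x|\le1+\eps\}$ is disjoint from $[-\eps,\eps]$, $z_\eps=0$ on $(-\eps,\eps)$, and $\max_x|z_\eps^{(k)}|\le b_2\eps\|f\|$ for $k=0,1,2$ — one gets that
\begin{equation*}
  \tilde y_\eps(x)=
  \begin{cases}
    y(x)-z_\eps(x), & |x|>\eps,\\
    \eps\, v_\eps(x/\eps), & |x|\le\eps,
  \end{cases}
\end{equation*}
lies in $\dmn S_\eps=W_2^2(\Real)$. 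A computation parallel to that in the proof of Proposition~\ref{PropYtildaEst}, but simpler since there is now neither a transmission term nor an integration-by-parts identity to invoke, gives $(S_\eps-\zeta)\tilde y_\eps=f+r_\eps$ with $r_\eps=z_\eps''+\zeta z_\eps$ for $|x|>\eps$ and $r_\eps=\bigl(\beta\Psi(x/\eps)-\eps\zeta\bigr)v_\eps(x/\eps)$ for $|x|\le\eps$, the $\eps^{-2}$- and $\eps^{-1}$-terms cancelling immediately because $v_\eps$ solves the ODE above. Using the uniform bound on $v_\eps$, the boundedness of $\Psi$, and $\|g(\eps^{-1}\,\cdot\,)\|_{L_2(-\eps,\eps)}=\eps^{1/2}\|g\|_{L_2(-1,1)}$, one obtains $\|r_\eps\|\le C_2\eps^{1/2}\|f\|$; hence $\|y_\eps-\tilde y_\eps\|\le|\Im\zeta|^{-1}\|r_\eps\|\le C_3\eps^{1/2}\|f\|$ and, likewise, $\|\tilde y_\eps-y\|\le C_4\eps^{1/2}\|f\|$, so that $\|(S_\eps-\zeta)^{-1}f-(S_0-\zeta)^{-1}f\|\le C\eps^{1/2}\|f\|$, which is the asserted norm resolvent convergence.

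I expect the only genuinely substantive step to be the uniform-in-$\eps$ solvability estimate for $v_\eps$: this is the one place where the hypothesis $\alpha\notin\Lambda_\Phi$ is used, namely to guarantee that zero lies in the resolvent set of the Neumann operator on $(-1,1)$ and hence that the solution operator is bounded with an $\eps$-independent norm. A small amount of care is also needed to keep $\supp z_\eps$ disjoint from $[-\eps,\eps]$, so that the correctors do not meet the singular potential $V_\eps$; apart from that, all remaining estimates are lighter than in the resonant case, since the inner amplitude $\eps v_\eps(x/\eps)$ is of size $\eps$ and the corresponding remainders are automatically small.
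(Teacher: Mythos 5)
Your proposal is correct and follows essentially the same route as the paper: the same auxiliary Neumann problem $-v_\eps''+\alpha\Phi v_\eps=\eps f(\eps\,\cdot\,)$ with data $v_\eps'(\mp1)=y'(\mp\eps)$ (solvable precisely because $\alpha\notin\Lambda_\Phi$), the same inner ansatz $\eps v_\eps(x/\eps)$, the same $\eta_0$-corrector supported off $[-\eps,\eps]$, and the same remainder $r_\eps$ with an $O(\eps^{1/2})\|f\|$ bound. The only cosmetic difference is that you bound $|y(\pm\eps)|$ by $c\eps\|f\|$ via the $C^1$ estimate, while the paper uses the slightly weaker $c\eps^{1/2}\|f\|$ bound; both suffice.
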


\begin{proof}
Exactly the same considerations, as in the previous section, apply here, with one important difference:
the function $y=(S_0-\zeta)f$ is small in a neighbourhood of the origin, since $y(0)=0$; and we have to change the approximation to $y_\eps=(S_\eps-\zeta)^{-1}f$ on the support of $V_\eps$. We set $\tilde{y}_\eps(x)=y(x)-z_\eps(x)$
for $|x|>\eps$ and $\tilde{y}_\eps(x)=\eps v_\eps(\frac x\eps)$ for $|x|\leq\eps$,
where $v_\eps$ is a solution to the boundary value problem
\begin{equation}\label{BVPforVeps}
-v_\eps''+\alpha \Phi(s)v_\eps=\eps f(\eps s),\; s\in(-1,1),\quad
     \phantom{-}v'_\eps(-1)=y'(-\eps), \quad v'_\eps(1)=y'(\eps),
\end{equation}
and $z_\eps(x)= \bigl(\eps v_\eps(-1)-y(-\eps)\bigr)\,\eta_0(-x-\eps)+\bigl(y(\eps)-\eps v_\eps(1)\bigr)\,\eta_0(x-\eps)$.
Note that the solution $v_\eps$ of \eqref{BVPforVeps} exists, because in the non-resonance case  the number $\alpha$ is not an eigenvalue of the corresponding homogeneous problem. Moreover,
\begin{equation*}
    \|v_\eps\|_{W^2_2(-1,1)}\leq
    c_1\bigl(|y'(-\eps)|+|y'(\eps)|+\eps \|f(\eps\,\cdot\,)\|_{L_2(-1,1)}\bigr) \leq c_2 \|f\|,
\end{equation*}
due to \eqref{EstF(eps)} and the apparent estimates
$\|y\|_{C^1(\Real\setminus \{0\})}\leq
c_3\|y\|_{W_2^2(\Real\setminus \{0\})}\leq
c_4\|f\|$.

Recalling that $y(0)=0$, we deduce from \eqref{EstY(Eps)-Y(0)} that $|y(\pm\eps)|\leq c_5\eps^{1/2}\|f\|$. Therefore the corrector $z_\eps$ can be bounded as
\begin{equation*}
     \max_{|x|\geq\eps}|z^{(k)}_\eps(x)|\leq c_6\eps^{1/2}\|f\|
\end{equation*}
for $k=0,1,2$, since
$|y(\pm\eps)-\eps v_\eps(\pm 1)|\leq |y(\pm\eps)|+\eps |v_\eps(\pm1)|\leq c_7\eps^{1/2}\|f\|$.

Next, an easy computation shows that $(S_\eps-\zeta)\tilde{y}_\eps=f+r_{\eps}$, where
\begin{equation*}
    r_\eps(x)=
    \begin{cases}
       z''_\eps(x)+\zeta z_\eps(x) & \text{if } |x|>\eps,\\
        \bigl(\beta \Psi(\frac x\eps)-\eps\zeta\bigr) v_\eps(\frac x\eps) & \text{if } |x|\leq\eps.
    \end{cases}
\end{equation*}
As in Proposition~\ref{PropYtildaEst}, from the estimates above we can derive $\|r_{\eps}\|\leq c_8\eps^{1/2}\|f\|$.
Hence,
\begin{multline*}
    \|(S_\eps-\zeta)^{-1}f-(S_0-\zeta)^{-1}f\|\leq\|\tilde{y}_\eps-y_\eps\|+\|\tilde{y}_\eps-y\|\\
    \leq \|(S_\eps-\zeta)^{-1}r_{\eps}\|+\|\eps v_\eps(\eps^{-1}\,\cdot\,)-z_\eps
    -\chi_\eps y\|
    \leq C \eps^{1/2}\|f\|,
\end{multline*}
which establishes the norm resolvent convergence of  $S_\eps$ to the operator
$S_-\oplus S_+$.
\end{proof}

\section{Scattering on $\alpha\delta'+\beta \delta$-Like Potentials}

\subsection{Scattering problem for $S_0$}
First, let us discuss stationary scattering associated with the Hamiltonians
$S(\theta_\alpha, \beta\kappa_\alpha)$ and $-\frac{d^2}{dx^2}$, which corresponds to the resonant case.
Consider the incoming monochromatic wave $e^{\rmi kx}$ with $k>0$ coming from the
left. Then the corresponding wave function has the form $\psi(x,k)=e^{\rmi kx} + R\,e^{-\rmi kx}$ for
$x<0$ and $\psi(x,k)=T\,e^{\rmi kx}$ for $x>0$.
Here $R$ and $T$ are respectively the reflection and transmission coefficients. The matching
conditions \eqref{ResZeroRangeConds} at the origin clearly yield
\begin{equation*}
\begin{pmatrix}
  T \\
  \rmi kT
\end{pmatrix}
=
\begin{pmatrix} \theta_\alpha & 0 \\ \beta \kappa_\alpha & \theta_\alpha^{-1} \end{pmatrix}
\begin{pmatrix}
  1+R \\
  \rmi k(1-R)
\end{pmatrix}.
\end{equation*}
Then one obtains the reflection and transmission coefficients from the left,
cf. \cite[Eq~23]{GadellaNegroNietoPL2009}:
\begin{equation}\label{LimitCoeffs}
    R(k,\alpha)=\frac{\rmi k(\theta_\alpha^{-1} -\theta_\alpha)+\beta \kappa_\alpha}{\rmi k(\theta_\alpha^{-1} +\theta_\alpha)-\beta \kappa_\alpha},\qquad
    T(k,\alpha)=\frac{2\rmi k}{\rmi k(\theta_\alpha^{-1} +\theta_\alpha)-\beta \kappa_\alpha},\quad \alpha\in\Lambda_\Phi.
\end{equation}
In the non-resonant case the scattering problem is trivial. The split condition $y(0)=0$ leads to
the equalities $R(k,\alpha)= -1$ and $T(k,\alpha)=0$.

\subsection{Convergence of the scattering data}
Next we investigate the stationary scattering for  $S_\eps(\alpha,\beta, \Phi,\Psi)$ and $-\frac{d^2}{dx^2}$, and prove that the scattering data converge as $\eps\to0$ to the scattering data for $S_0$ obtained above.
We look for the positive-energy solution to the equation $-\psi''+V_\eps \psi=k^2\psi$  given in the form
\begin{equation*}
\psi_\eps(x,k,\alpha)= \begin{cases}
                       e^{ikx}+R_\eps e^{-ikx} & \hbox{for $x<-\eps$,} \\
   A_\eps u_\eps(\eps^{-1}x,\alpha)+B_\eps v_\eps(\eps^{-1}x,\alpha) & \hbox{for $|x|<\eps$,} \\
                       \phantom{e^{ikx}+\,}T_\eps e^{ikx} & \hbox{for $x>\eps$.}
                     \end{cases}
\end{equation*}
Here $u_\eps=u_\eps(s,\alpha)$ and $v_\eps=v_\eps(s,\alpha)$ are solutions of the equation
\begin{equation}\label{AuxlProblem}
-w''+\alpha\Phi(s)w+\beta\eps\Psi(s)w=\eps^2k^2 w, \quad s\in(-1,1)
\end{equation}
subject to the  initial conditions
\begin{equation}\label{InitialCondUeVe}
    u_\eps(-1,\alpha)=1,\quad u'_\eps(-1,\alpha)=0\quad \text{and}\quad
    v_\eps(-1,\alpha)=0, \quad v'_\eps(-1,\alpha)=1
\end{equation}
respectively.
The coefficients $R_\eps$, $A_\eps$, $B_\eps$, and $T_\eps$ can be found from the linear system
\begin{equation*}
     \begin{pmatrix}
-e^{\rmi \eps k}& 1& 0& 0 \\
     \rmi \eps k e^{\rmi \eps k}& 0& 1&0\\
     0&u_\eps(1,\alpha)&v_\eps(1,\alpha)&-e^{\rmi \eps k}\\
     0&u'_\eps(1,\alpha)&v'_\eps(1,\alpha)&-\rmi \eps k e^{\rmi \eps k}
       \end{pmatrix}
     \begin{pmatrix} R_\eps\\A_\eps\\B_\eps\\T_\eps\end{pmatrix}=
     \begin{pmatrix} e^{-\rmi \eps k} \\\rmi \eps k e^{-\rmi \eps k}\\0\\0\end{pmatrix}
\end{equation*}
obtained by matching the solution and its first derivative at the points $x=\pm\eps$.
By Cramer's rule, we can derive
\begin{equation}\label{RepsTesp}
\begin{aligned}
&R_\eps(k,\alpha)= -e^{-2\rmi \eps k}\,
    \frac{u'_\eps(1,\alpha)-\rmi \eps k\bigl(u_\eps(1,\alpha)-v'_\eps(1,\alpha)\bigr)+\eps^2 k^2 v_\eps(1,\alpha)}
    {u'_\eps(1,\alpha)-\rmi \eps k\bigl(u_\eps(1,\alpha)+v'_\eps(1,\alpha)\bigr)-\eps^2 k^2 v_\eps(1,\alpha)},
    \\
&T_\eps(k,\alpha)=-e^{-2\rmi \eps k}\,
    \frac{2\rmi \eps k}
    {u'_\eps(1,\alpha)-\rmi \eps k\bigl(u_\eps(1,\alpha)+v'_\eps(1,\alpha)\bigr)-\eps^2 k^2 v_\eps(1,\alpha)}.
\end{aligned}
\end{equation}
Here we use the identity $u_\eps(1,\alpha)v'_\eps(1,\alpha)-u'_\eps(1,\alpha)v_\eps(1,\alpha)=1$ that follows from  \eqref{InitialCondUeVe} and the constancy in $\xi$ and $\eps$ of the Wronskian of $u_\eps$ and $v_\eps$.

\begin{thm}\label{ThmConvScattData}
  For each $k>0$  and  $\alpha\in\Real$ the scattering data
$R_\eps(k,\alpha)$ and $T_\eps(k,\alpha)$ converge  respectively to $R(k,\alpha)$
and\, $T(k,\alpha)$ as $\eps\to 0$, where the limit values are given by \eqref{LimitCoeffs} in the resonant case, and $R(k,\alpha)= -1$, $T(k,\alpha)=0$ otherwise.
\end{thm}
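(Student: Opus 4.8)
The plan is to pass to the limit directly in the explicit Cramer's-rule formulas \eqref{RepsTesp}, so the whole matter reduces to understanding the asymptotics as $\eps\to0$ of the four numbers $u_\eps(1,\alpha)$, $u'_\eps(1,\alpha)$, $v_\eps(1,\alpha)$, $v'_\eps(1,\alpha)$, where $u_\eps,v_\eps$ solve \eqref{AuxlProblem}--\eqref{InitialCondUeVe}. First I would treat \eqref{AuxlProblem} as a perturbation of the $\eps$-independent Neumann Cauchy problem $-w''+\alpha\Phi w=0$ on $(-1,1)$: the extra terms $\beta\eps\Psi(s)w-\eps^2k^2w$ are $O(\eps)$ in the relevant sense, so by standard continuous dependence of solutions of linear ODEs on parameters (Gronwall) one gets $u_\eps(\cdot,\alpha)\to u_*$ and $v_\eps(\cdot,\alpha)\to v_*$ in $C^1[-1,1]$, where $u_*,v_*$ are the solutions of $-w''+\alpha\Phi w=0$ with the same initial data \eqref{InitialCondUeVe}. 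In particular $u_\eps(1,\alpha)\to u_*(1)$, etc., with an error $O(\eps)$, and the Wronskian identity is preserved in the limit.

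The dichotomy then comes from the behaviour of $u'_*(1)$. If $\alpha\notin\Lambda_\Phi$, then by definition the Neumann problem \eqref{NeumanProblemWithAlpha} has no nontrivial solution, hence the solution $u_*$ with $u_*(-1)=1$, $u'_*(-1)=0$ satisfies $u'_*(1)\neq0$. Then in \eqref{RepsTesp} the denominator tends to $u'_*(1)\neq0$ while the numerator of $T_\eps$ is $O(\eps)\to0$ and the numerator of $R_\eps$ tends to $u'_*(1)$; together with $e^{-2\rmi\eps k}\to1$ this gives $T_\eps\to0$ and $R_\eps\to-1$, as claimed. If instead $\alpha\in\Lambda_\Phi$, then $u_*$ is precisely (a scalar multiple of) the Neumann eigenfunction, normalized by $u_*(-1)=1$, so $u_*=u_\alpha$ and $u'_*(1)=u_\alpha'(1)=0$, $u_*(1)=\theta_\alpha$. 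Here the leading terms in the denominator of \eqref{RepsTesp} cancel, so I must extract the next-order term: write $u'_\eps(1,\alpha)=u'_*(1)+\eps\,\rho_\eps$ with $\rho_\eps$ the $O(1)$ first-order correction, which is obtained by multiplying \eqref{AuxlProblem} by $u_\alpha$, integrating over $(-1,1)$ and integrating by parts twice exactly as in the derivation of \eqref{IntByPartsVepsUalpha}. This yields $u'_\eps(1,\alpha)=\eps\bigl(\beta\int_{-1}^1\Psi u_\alpha^2\,ds-\eps k^2\int_{-1}^1 u_\alpha^2\,ds\bigr)+o(\eps)=\eps\beta\theta_\alpha\kappa_\alpha+o(\eps)$, using $\kappa_\alpha=\theta_\alpha^{-1}\int_{-1}^1\Psi u_\alpha^2\,ds$. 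Dividing numerator and denominator in \eqref{RepsTesp} by $\eps$ and letting $\eps\to0$ (so $v_\eps(1,\alpha)\to v_*(1)$, $v'_\eps(1,\alpha)\to v'_*(1)$, and the $\eps^2k^2 v_\eps$ terms vanish) produces
\begin{equation*}
R(k,\alpha)=-\frac{\beta\theta_\alpha\kappa_\alpha-\rmi k(\theta_\alpha-v'_*(1))}{\beta\theta_\alpha\kappa_\alpha-\rmi k(\theta_\alpha+v'_*(1))},\qquad
T(k,\alpha)=-\frac{2\rmi k}{\beta\theta_\alpha\kappa_\alpha-\rmi k(\theta_\alpha+v'_*(1))}.
\end{equation*}
Finally I use the Wronskian relation $u_*(1)v'_*(1)-u'_*(1)v_*(1)=1$, which at resonance reads $\theta_\alpha v'_*(1)=1$, i.e. $v'_*(1)=\theta_\alpha^{-1}$; substituting this and multiplying through by $-1$ recovers exactly \eqref{LimitCoeffs}.

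The main obstacle is the resonant case: the naive limit of \eqref{RepsTesp} is the indeterminate $0/0$, so the argument genuinely requires the first-order asymptotic expansion of $u'_\eps(1,\alpha)$ in $\eps$, and one must be careful that the remainder is $o(\eps)$ uniformly enough to survive division by $\eps$ — this is where the integration-by-parts identity against the half-bound state $u_\alpha$ (the same device used in Proposition~\ref{PropVeps}) does the real work, simultaneously producing the correct coefficient $\beta\theta_\alpha\kappa_\alpha$ and showing the $k^2$-term drops out in the limit. Everything else is routine continuous dependence for linear ODEs plus algebra with the Wronskian identity. One should also note the claimed convergence is pointwise in $k$ and $\alpha$, not uniform, so no extra care about uniformity in those variables is needed.
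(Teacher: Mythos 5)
Your overall strategy is exactly the paper's: use smooth dependence on parameters to get $u_\eps\to u_*$, $v_\eps\to v_*$ in $C^1$, dispose of the non-resonant case at once because $u'_*(1)\neq0$, and in the resonant case extract the $O(\eps)$ term of $u'_\eps(1,\alpha)$ by pairing \eqref{AuxlProblem} with the half-bound state and integrating by parts (the paper's identity \eqref{AsympUeVeAt1}); your shortcut of getting $v'_*(1)=\theta_\alpha^{-1}$ from the limit Wronskian rather than from a second integration-by-parts identity is a harmless variation. However, there is a concrete computational error in the one step you yourself identify as doing the real work. When you multiply the equation for $u_\eps$ by $u_\alpha$ and integrate by parts twice, the surviving boundary term at $s=1$ is $-u'_\eps(1,\alpha)\,u_\alpha(1)=-\theta_\alpha u'_\eps(1,\alpha)$, not $-u'_\eps(1,\alpha)$ (the terms at $s=-1$ and the term $u_\eps(1)u'_\alpha(1)$ vanish because $u'_\eps(-1)=u'_\alpha(-1)=u'_\alpha(1)=0$). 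The identity is therefore
\begin{equation*}
\theta_\alpha\,u'_\eps(1,\alpha)=\eps\beta\int_{-1}^1\Psi u_\eps u_\alpha\,ds+O(\eps^2),
\qquad\text{hence}\qquad
u'_\eps(1,\alpha)=\eps\beta\kappa_\alpha+o(\eps),
\end{equation*}
and not $\eps\beta\theta_\alpha\kappa_\alpha+o(\eps)$ as you wrote.

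This is not merely cosmetic: carrying your value through \eqref{RepsTesp} produces limits with $\beta\theta_\alpha\kappa_\alpha$ in place of $\beta\kappa_\alpha$, so your displayed formulas for $R(k,\alpha)$ and $T(k,\alpha)$ do \emph{not} reduce to \eqref{LimitCoeffs} after substituting $v'_*(1)=\theta_\alpha^{-1}$, contrary to your final claim (they agree only when $\theta_\alpha=1$ or $\beta\kappa_\alpha=0$). Had you actually performed that last substitution you would have seen the mismatch, which is the signal that the factor $u_\alpha(1)=\theta_\alpha$ was dropped in the boundary term. With the corrected expansion $u'_\eps(1,\alpha)=\eps\beta\kappa_\alpha+o(\eps)$, dividing numerator and denominator of \eqref{RepsTesp} by $\eps$ gives precisely \eqref{LimitCoeffs}, and the rest of your argument (non-resonant case, pointwise convergence in $k$ and $\alpha$, vanishing of the $k^2$ terms) is correct and coincides with the paper's proof.
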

 \begin{proof} From the smooth dependence of a solution to the Cauchy problem on para\-me\-ters, we see that $u_\eps$ and $v_\eps$ converge in $C^1(-1,1)$ to the solutions $u$ and $v$ respectively of the equation $-w''+\alpha\Phi w=0$ subject to the  initial conditions
\begin{equation*}
    u(-1,\alpha)=1,\quad u'(-1,\alpha)=0\quad \text{and}\quad
    v(-1,\alpha)=0, \quad v'(-1,\alpha)=1.
\end{equation*}

 \textit{The non-resonant case.} Since  $\alpha$ is not a eigenvalue of problem \eqref{NeumanProblemWithAlpha}, we conclude that $u'(1,\alpha)$ is different from $0$. From \eqref{RepsTesp}, it immediately follows that
 $R_\eps(k,\alpha)= -1+O(\eps)$ and $T_\eps(k,\alpha)=O(\eps)$ as $\eps\to 0$.

\textit{The resonant case.} If $\alpha$ is resonant, then $u$ is an eigenfunction of \eqref{NeumanProblemWithAlpha}. Therefore $u'(1,\alpha)=0$ and $u(1,\alpha)=\theta_\alpha$.
Throughout the proof, $\theta$ and $\kappa$ denote the resonant and intercoupling maps for a pair $(\Phi, \Psi)$.
Let us substitute functions $u_\eps$ and $v_\eps$ into \eqref{AuxlProblem} alternately. Multiplying the derived identities by $u$ and integrating by parts yield
\begin{equation}\label{AsympUeVeAt1}
   \begin{aligned}
      &\theta_\alpha u'_\eps(1,\alpha)=\phantom{1+}\eps\beta \int_{-1}^1\Psi u_\eps u\,d\xi+
     \eps^2 k^2\int_{-1}^1u_\eps u\,d\xi,\\
     &\theta_\alpha v'_\eps(1,\alpha)=1+\eps\beta \int_{-1}^1\Psi v_\eps u\,d\xi+
     \eps^2 k^2\int_{-1}^1v_\eps u\,d\xi.
   \end{aligned}
\end{equation}
Therefore $u'_\eps(1,\alpha)=\eps\beta \kappa_\alpha+o(\eps)$ and
$v'_\eps(1,\alpha)=\theta_\alpha^{-1}+O(\eps)$ as $\eps\to 0$. Combining then these asymptotic formulae and
\eqref{RepsTesp}, we have
\begin{equation}
R_\eps(k,\alpha)= \frac{\rmi k(\theta_\alpha^{-1} -\theta_\alpha)+\beta \kappa_\alpha}{\rmi k(\theta_\alpha^{-1} +\theta_\alpha)-\beta \kappa_\alpha}+ o(1),\quad
T_\eps(k,\alpha)=\frac{2\rmi k}{\rmi k(\theta_\alpha^{-1} +\theta_\alpha)-\beta \kappa_\alpha}+ o(1),
\end{equation}
as $\eps\to 0$, which completes the proof.
\end{proof}

\subsection{Resonances in the transmission probability}

It follows from the theorem above that the probability of transmission across the barrier $V_\eps=\alpha\eps^{-2}\Phi(\eps^{-1}\cdot)+\beta\eps^{-1}\Psi(\eps^{-1}\cdot)$
is negligibly small for $\alpha\not\in \Lambda_\Phi$.
However, for the resonant coupling constants $\alpha$  this probability  remains non-zero as $\eps\to 0$, resulting in the existence of non-separated states.
The resonances of transmission, as shown in Fig.~\ref{FigT}, are sharp like one-point spikes, which spread for non-zero values as $\eps\to0$. The limit resonant values can be represented via  the maps $\theta$  and $\kappa$:
\begin{equation}\label{TransProbability}
    |T(k,\alpha)|^2=\frac{4k^2}{ k^2(\theta_\alpha^{-1} +\theta_\alpha)^2+\beta^2 \kappa_\alpha^2},\qquad
    \alpha\in \Lambda_\Phi.
\end{equation}
 Several special cases are of interest.
\begin{figure}[t]
  \includegraphics[scale=1]{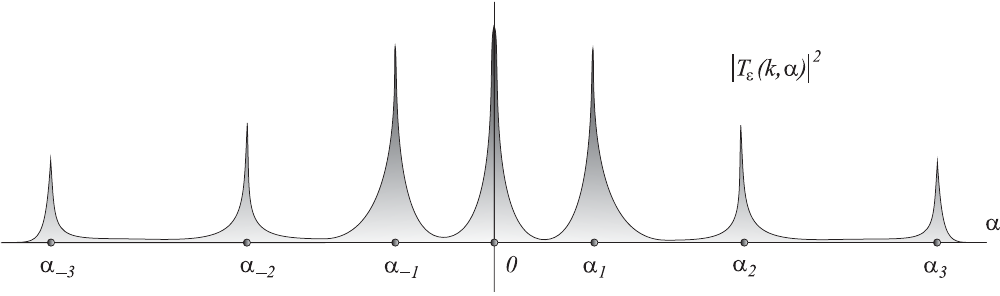}\\
  \caption{A plot of transmission probability $|T_\eps(k,\alpha)|^2$ as function of $\alpha$.}\label{FigT}
\end{figure}
\subsubsection{$\delta$-potentials} Set $\alpha=0$, then $V_\eps=\beta\varepsilon^{-1}\Psi(\varepsilon^{-1}\cdot)$. Assume also that $\int_\Real\Psi\,dt=1$. Consequently $V_\eps$ converges to $\beta\delta(x)$ in the sense of distributions. Note that the value $\alpha=0$ belongs to $\Lambda_\Phi$ for each $\Phi$.
The corresponding eigenfunction $u_0$ of  \eqref{NeumanProblemWithAlpha} is constant on $[-1,1]$, and both numbers $\theta_0$ and $\kappa_0$ are equal to $1$ (see \eqref{MapTheta}, \eqref{MapKappa}).
Hence, the limit operator $S(1, \beta)$ describes the point interaction at the origin with the coupling matrix given by \eqref{DeltaMatrix}.
In addition, from \eqref{TransProbability} we have $|T(k,0)|^2=\frac{4k^2}{ 4k^2+\beta^2}$,  which is common knowledge of the scattering by the $\beta\delta(x)$-potential.

\subsubsection{$\delta'$-like potentials} Now we set $\beta=0$. Then  $V_\eps=\alpha\varepsilon^{-2}\Phi(\varepsilon^{-1}\cdot)$. If  $\int_\Real\Phi\,dt=0$, then
$$
V_\eps(x)\to -\left(\int_\Real s\Phi(s)\,ds\right) \delta'(x)\qquad\text{in } \mathcal{D}'(\Real),
$$
and so in particular the limit can be zero. Otherwise, the family of potentials $V_\eps$ diverges in the sense of distributions. Regardless of the weak convergence of $V_\eps$ the transmission probability $|T_\eps(k,\alpha)|^2$
for each $\alpha\in \Lambda_\Phi$ converges to the value
\begin{equation}\label{T2forDeltaPrime}
|T(k,\alpha)|^2=\frac{4}{(\theta_\alpha^{-1} +\theta_\alpha)^2}
\end{equation}
that  does not depend  on energy of particles (cf. \cite[Sec.~5]{GolovatyHryniv:2010}). The limit operator $S(\alpha, 0)$ corresponds to the point interaction at the origin with the coupling matrix given by \eqref{DeltaPrimeMatrix}, $\theta=\theta_\alpha$.
As opposed to the case of the $\delta$-potential, this result is shape-dependent via  the resonant map $\theta$.

Our analysis of the exactly solvable models with piecewise-constant potentials  and
the computer simulation of more complicated models may suggest that
if $\varepsilon^{-2}\Phi(\varepsilon^{-1}\cdot)\to \delta'(x)$ as $\eps\to 0$,
then $|\theta_\alpha|\to +\infty$ as $\alpha\to +\infty$ and $|\theta_\alpha|\to 0$
as $\alpha\to -\infty$. Therefore the transmission probability for the $\delta'$-like potentials given by \eqref{T2forDeltaPrime} is very small for large $|\alpha|$.
As we will see in the next special case, it is not true in general.

It is noteworthy that the operator $S(\alpha, 0)$ can also appear as a solvable model for
the Schr\"{o}dinger operator  with the potential $V_\eps$ when $\beta$ is different from $0$.
Indeed, for some resonant values of $\alpha$ it is possible for potentials $\Phi$ and $\Psi$ to be
uncoupled, i.e., $\kappa_\alpha=0$. If, for instance, a potential $\Psi$ is sign-changing,
then it is possible for the integral $\int_{-1}^1\Psi u_\alpha^2\,ds$ to be zero.

\subsubsection{Potentials with total transparency at resonances}
An interesting case occurs when for a pair of potentials $\Phi$ and $\Psi$ the intercoupling map
$\kappa$ is identically zero and the resonant map $\theta$ is unimodular, $|\theta|=1$.
Then the marginal transmission probability across  the potential $V_\eps$ is given by
\begin{equation*}
    |T(k,0)|^2=\begin{cases}
                    1 & \hbox{ if $\alpha\in\Lambda_\Phi$,} \\
                       0 & \hbox{ otherwise.}
                     \end{cases}
\end{equation*}
Hence, either the potential is asymptotically  opaque for particles or else asymptotically totally transparent at resonances.
For instance, this kind of case arises when $\Phi$  is an even function, whereas  $\Psi$ is an odd one.
Then each eigenfunction $u_\alpha$ of \eqref{NeumanProblemWithAlpha} is either odd or even. In any case,  $|\theta_\alpha|=1$ and $\kappa_\alpha=\theta_\alpha^{-1} \int_{-1}^1\Psi u_\alpha^2\,ds=0$, since  the square of $u_\alpha$ is an even function.


\end{document}